\documentclass[11pt,a4paper]{article}

\usepackage[top=37mm, bottom=33mm, left=27mm, right=27mm]{geometry}

\usepackage{cancel}

\usepackage[T1]{fontenc}

\usepackage{amssymb, amsfonts, amsthm}
\usepackage{mathtools}
\usepackage{bm}
\usepackage{accents} 
\usepackage{xcolor}
\definecolor{ms-green}{HTML}{009a55}   
\definecolor{ms-blue}{HTML}{006fb8}   
\definecolor{ms-magenta}{HTML}{ec008d}   

\usepackage{titlesec}
\titleformat{\section}[hang]{\normalfont\scshape\centering}{\thesection.}{1em}{}
\titleformat{\subsection}[runin]
  {\normalfont\sffamily\normalsize\bfseries}{\thesubsection.}{0.5em}{\addperiod}
\titleformat{\subsubsection}[runin]
  {\normalfont\normalsize\itshape}{\thesubsubsection.}{0.5em}{\addperiod}
\newcommand{\addperiod}[1]{#1.}

\makeatletter
\def\@maketitle{%
  \newpage
  \null
  \vskip 2em%
  \begin{center}%
    {\normalfont\normalsize\bfseries\MakeUppercase{\@title} \par}%
    \vskip 1.5em%
    {\normalfont\normalsize
      \lineskip .5em%
      \begin{tabular}[t]{c}%
         \@author
      \end{tabular}\par}%
  \end{center}%
  \par
  \vskip 1.5em}
\makeatother

\usepackage{enumerate}

\numberwithin{equation}{section}

\usepackage[font={small}, textfont={sf}, labelfont={bf,sf}, margin=1cm]{caption}
\newtheorem{assumption}{Assumption}
\newtheorem{theorem}{Theorem}[section]
\newtheorem{proposition}{Proposition}[section]

\newtheorem{lemma}[theorem]{Lemma}
\newtheorem{remark}{Remark}[section]

\usepackage{algorithm}
\usepackage{algorithmic}

\usepackage[bookmarksopen=false, bookmarksnumbered=true, colorlinks=true, linkcolor=ms-blue, citecolor = ms-green, urlcolor = ms-magenta ]{hyperref}

\usepackage{orcidlink}

\hypersetup{
   pdftitle={Low-rank solutions to a class of parametrized systems using Riemannian optimization},
   pdfsubject={math.NA},
   pdfauthor={Marco Sutti and Tommaso Vanzan},
   pdfkeywords={keywords}
}

\renewenvironment{abstract}
 {\small
  \begin{center}
  \bfseries \abstractname\vspace{-.5em}\vspace{0pt}
  \end{center}
  \list{}{
    \setlength{\leftmargin}{0.0cm}%
    \setlength{\rightmargin}{\leftmargin}%
  }%
  \item\relax}
 {\endlist}

\let\Xi\varXi
\let\Phi\varPhi
\let\Psi\varPsi
\let\Pi\varPi
\let\Sigma\varSigma

\newcommand{\email}[1]{\protect\href{mailto:#1}{#1}}

\newcommand*{\vertbar}{\rule[-1ex]{0.5pt}{2.5ex}}

\newcommand{\tr}{^{\top}} 

\newcommand{\dx}{\,\mathrm{d}x}

\makeatletter
\newcommand*{\Relbarfill@}{\arrowfill@\Relbar\Relbar\Relbar}
\newcommand*{\xeq}[2][]{\ext@arrow 0055\Relbarfill@{#1}{#2}}
\makeatother

\DeclareMathOperator{\grad}{grad}
\DeclareMathOperator{\Retraction}{R}
\DeclareMathOperator{\Hessian}{Hess}
\DeclareMathOperator{\D}{D}   
\DeclareMathOperator{\vecop}{vec}
\DeclareMathOperator{\rank}{rank}
\DeclareMathOperator{\diag}{diag}

\DeclareMathOperator{\Proj}{Proj}
\DeclareMathOperator{\trace}{Tr}

\newcommand{\kt}{\ast\tr}  
\newcommand{\PTXM}{\Proj_{X}}

\newcommand{\R}{\mathbb{R}}
\newcommand{\Rmn}{\R^{m \times n}}

\newcommand{\C}{\mathbb{C}}

\newcommand{\cM}{{\mathcal M}}
\newcommand{\cMr}{\cM_{r}} 
\newcommand{\Stmr}{\mathrm{St}_{r}^{m}}
\newcommand{\Stnr}{\mathrm{St}_{r}^{n}}

\newcommand{\T}{\mathrm{T}}
\newcommand{\TXMr}{\mathrm{T}_{X}\cMr}

\newcommand{\Utp}{\widetilde{U}_{\mathrm{p}}}
\newcommand{\Vtp}{\widetilde{V}_{\mathrm{p}}}
\newcommand{\Uhp}{\widehat{U}_{\mathrm{p}}}
\newcommand{\Vhp}{\widehat{V}_{\mathrm{p}}}

\newcommand{\cA}{{\mathcal A}}

\newcommand{\cD}{{\mathcal D}}
\newcommand{\cF}{{\mathcal F}}
\newcommand{\cH}{{\mathcal H}}
\newcommand{\cI}{{\mathcal I}}
\newcommand{\cJ}{{\mathcal J}}
\newcommand{\cO}{{\mathcal O}}
\newcommand{\cP}{{\mathcal P}}
\newcommand{\cT}{{\mathcal T}}

\newcommand{\F}{\mathrm{F}}
\newcommand{\xib}{{\bm \xi}}
\newcommand{\xb}{{\bm x}}

\newcommand{\bb}[0]{{\bm b}}

\newcommand{\Ut}[0]{\widetilde{U}}

\newcommand{\Mh}[0]{\widehat{M}}

\newcommand{\CUR}{\text{CUR}}
\newcommand{\rt}{\widetilde{r}}
\newcommand{\Sigmat}[0]{\widetilde{\Sigma}}

\newcommand{\Vt}[0]{\widetilde{V}}
\newcommand{\Mt}[0]{\widetilde{M}}
\newcommand{\barA}[0]{\bar{A}}
\newcommand{\vb}[0]{{\bm v}}
\newcommand{\zb}[0]{{\bm z}}

\newcommand{\qb}[0]{{\bm q}}
\newcommand{\cb}[0]{{\bm c}}
\newcommand{\rhob}[0]{{\bm \rho}}
\newcommand{\unob}[0]{{\bm 1}}

\newcommand{\wb}{{\bm w}}

\newcommand{\sigmat}{\tilde{\sigma}}

\newcommand*{\dt}[1]{%
  \accentset{\mbox{\large\bfseries .}}{#1}}

\begin{document}

\title{Low-rank solutions to a class of parametrized systems using Riemannian optimization}

\author{\MakeUppercase{Marco Sutti}\thanks{Division of Mathematics, Gran Sasso Science Institute, L'Aquila, Italia (\email{marco.sutti@gssi.it}).}\hspace{2mm}\orcidlink{0000-0002-8410-1372} \MakeUppercase{and Tommaso Vanzan}\thanks{Dipartimento di Scienze Matematiche, Politecnico di Torino, Italia (\email{tommaso.vanzan@polito.it}).}\hspace{2mm}\orcidlink{0000-0001-7554-4692}}
%


\maketitle

\begin{abstract}
We propose a computational framework for computing low-rank approximations to the ensemble of solutions of a parametrized system of the form $A(\boldsymbol{\xi}) \boldsymbol{x}(\boldsymbol{\xi})+g(\boldsymbol{x}(\boldsymbol{\xi}))=b(\boldsymbol{\xi})$ for multiple parameter values.
The central idea is to reinterpret the parametrized system as the first-order optimality condition of an optimization problem set over the space of real matrices, which is then minimized over the manifold of fixed-rank matrices. This formulation enables the use of Riemannian optimization techniques, including conjugate gradient and trust-region methods, and covers both linear and nonlinear instances under mild assumptions on the structure of the parametrized system.
We further provide a theoretical analysis establishing conditions under which the solution matrix admits accurate low-rank approximations, extending existing results from linear to nonlinear problems. 
To enhance computational efficiency and robustness, we discuss tailored preconditioning strategies and a rank-compression mechanism to control the rank growth induced by nonlinearities.
Numerical experiments demonstrate that the proposed approach achieves significant computational savings compared to solving each system independently, as well as highlight the potential of Riemannian optimization methods for low-rank approximations in large-scale parametrized nonlinear problems.

\bigskip
\textbf{Key words.} low-rank approximations, parametrized systems, Riemannian optimization, preconditioning 

\medskip
\textbf{AMS subject classifications.} 65F55, 65K10, 60H15

\end{abstract}

\bigskip

\section{Introduction}
This paper is concerned with the numerical solution of the parametrized nonlinear system
\begin{equation}\label{eq:nonlinear_model}
	F(\xb(\xib),\xib) \coloneqq A(\xib)\xb(\xib)+g(\xb(\xib))-\bb(\xib)=0,
\end{equation}
where $ F \colon \R^m\times \Gamma\rightarrow \R^m$, the parameter vector $\xib$ belongs to a compact subset $\Gamma$ of $\R^p$, $A(\xib)\in \R^{m\times m}$ is a symmetric positive definite matrix for every $\xib\in \Gamma$, $\bb(\xib)\in \R^{m}$, and $g \colon \R^m\rightarrow \R^m$ is a smooth nonlinear function acting componentwise, $ g(\xb(\xib))=(g_1((\xb(\xib))_1),\dots,g_m((\xb(\xib))_m ) ) $, $\left\{g_j\right\}_{j=1}^m$ being scalar-valued functions.
We are interested in solving \eqref{eq:nonlinear_model} for many different parameter values $\xib_1,\dots,\xib_{n}$. This task is fundamental in modern scientific computing. Examples arise, for instance, in the construction of projection- and interpolation-based surrogates in reduced order modeling \cite{benner2015survey,quarteroni2015reduced,hesthaven2016certified},
the computation of statistics in uncertainty quantification \cite{lord2014introduction,doi:10.1137/100786356}, and the evaluation of functionals and gradients in parametric design \cite{qian2017certified} or in optimization under uncertainty \cite{heinkenschloss2025optimization,pieraccini2025adaptive}. 
Although the nonlinear system can in principle be solved independently for each parameter value --- and thus in an embarrassingly parallel fashion --- the overall computational cost can be prohibitive. This is especially true when both the problem dimension $m$ and the number of parameters $n$ are large.

A vast literature has been devoted to accelerating the solution of sequences of {\em linear} systems. Relevant contributions include techniques that recycle subspaces across different parameter instances \cite{parks2006recycling,kilmer2006recycling,guido2024subspace}, works that update an initial preconditioner \cite{doi:10.1137/20M1331123} or interpolate a set of precomputed preconditioners for new parameter values \cite{zahm2016interpolation,correnty2022preconditioned,doi:10.1137/24M1628591}, projection-based methods \cite{gu2005numerical}, and truncated low-rank or tensor-based methods \cite{KressnerTobler:20112}. 
A closely related research direction in numerical linear algebra pursues the goal of computing a global approximation of the solution map by seeking an approximated solution of the form
\begin{equation}\label{eq:separation_variable_solution}
	\xb (\xib)\approx \sum_{\nu\in\Lambda} \xb_\nu \psi_\nu(\xib),
\end{equation}
where $\Lambda$ is an index set, $\left\{\psi_\nu \right\}_{\nu\in\Lambda}$ is a fixed, a priori chosen basis (e.g., orthogonal polynomials), and the vectors $\left\{\xb_\nu\right\}_{\nu\in\Lambda}$ are the coefficients to be determined. This has attracted much interest since, considering a {\em linear} version of \eqref{eq:nonlinear_model} and assuming an affine structure of $A(\xib)$, the computation of these coefficients reduces to solving a linear system
\begin{equation}\label{eq:linear_system_SGalerkin}
	\left(\sum_{i=0}^p H_{i}\otimes A_{i} \right) \tilde{{\bm x}} ={\bm \gamma},
\end{equation}
for specific matrices $H_i\in \R^{|\Lambda|\times |\Lambda|}$ and $A_i\in \R^{m\times m}$ (see, e.g., \cite[Ch.~9]{lord2014introduction}), where the vector $\tilde{{\bm x}}=(\xb_{1},\dots,\xb_{|\Lambda|})\tr\in \R^{m |\Lambda|}$ collects the coefficients $\left\{\xb_\nu\right\}_{\nu\in\Lambda}$ while ${\bm \gamma}\in \R^{m |\Lambda|}$ is a suitable right-hand side. 
Despite the large size of the matrix in \eqref{eq:linear_system_SGalerkin}, its particular structure can be exploited to design efficient solvers. Stationary iterative methods and preconditioners for Krylov solvers have been proposed in \cite{powell2009block,rosseel2010iterative,ullmann2010kronecker,ullmann2012efficient,kubinova2020block}. In certain cases, $\tilde{\bm x }$ can be approximated by a vector of low tensor rank, motivating the development of low-rank algorithms \cite{khoromskij2011tensor,benner2015low,lee2017preconditioned,KAYA2024115925}. 
An alternative and equivalent perspective recasts \eqref{eq:linear_system_SGalerkin} into the multilinear matrix equation
\begin{equation}\label{eq:multilinearmatrix}
	A_0 X H_0\tr + \dots + A_p X H_p\tr = \Gamma,
\end{equation}
where $X\coloneqq[{\bm x}_1,\dots,{\bm x}_{|\Lambda|}]\in \R^{m\times |\Lambda|}$. Recent advances include matrix-oriented Krylov methods with low-rank truncations \cite{doi:10.1137/25M1723402,palitta2021convergence}, projection methods \cite{powell2017efficient}, and, particularly relevant to this work, are the contribution on Riemannian optimization algorithms, originally for the Lyapunov equation \cite{vandereycken2010riemannian,kressner2016preconditioned} and recently extended to multilinear matrix equations like \eqref{eq:multilinearmatrix}, see \cite{BKR:2025}.

In contrast to the extensive literature surveyed above, the development of efficient algorithms for {\em nonlinear} systems remains much less mature, see \cite{giraldi2019weakly} for a notable contribution proposing a truncated version of Newton's algorithm. 
The reasons are twofold. On the one hand, most of the techniques previously mentioned strongly leverage properties of Krylov methods or the specific structure of the linear systems \eqref{eq:linear_system_SGalerkin} and \eqref{eq:multilinearmatrix}. Hence, their extensions to the nonlinear setting are not evident. On the other hand, nonlinear operations typically prevent computations from being performed in low-rank formats, which is a key property for computational efficiency.
Among the methodologies surveyed, Riemannian optimization algorithms seem to be the most promising for extension to nonlinear settings.

\subsection{Contributions}
The present manuscript makes three main contributions.

Our first contribution consists of a general framework to compute a low-rank approximation to the matrix
$X=[\xb(\xib_1)|\cdots|\xb(\xib_n)]$, whose columns are the solutions of \eqref{eq:nonlinear_model} for the parameter-values $\xib_1,\dots,\xib_n$. The framework handles both linear and nonlinear instances of \eqref{eq:nonlinear_model} in a unified manner, under common assumptions on the matrices $A(\xib)$ and nonlinearity $g$, see Section~\ref{sec:problem_statement}.
The central idea is to interpret \eqref{eq:nonlinear_model} as the first-order optimality condition of the optimization problem,
\begin{equation}\label{eq:nonlinear_functional}
	\min_{X\in \Rmn}\cF(X)\coloneqq\sum_{j=1}^n m_i\left(\frac{1}{2} \, \xb(\xib_j)\tr \! A(\xib_j)\xb(\xib_j)+\unob\tr G(u(\xib_j)) - \xb(\xib_j)\tr \bb(\xib_j)\right),
\end{equation}
where $G$ contains the primitives of $g_i$ componentwise and $\left\{m_i\right\}_{i=1}^n$ are positive weights.
The functional \eqref{eq:nonlinear_functional} is then minimized over the manifold of rank-$r$ matrices using Riemannian optimization methods (see~\protect{\cite{AMS:2008,Edelman:1998,boumal_2023}}), such as Riemannian conjugate gradient and Riemannian trust-region algorithms. As in \cite{giraldi2019weakly}, our method does not deliver a surrogate model, but an approximation of the solution at parameter values $\xib_1,\dots,\xib_n$. However, if these samples correspond to a set of quadrature or interpolation points, then an interpolatory-based surrogate model can be readily constructed.

Since, a priori, it is not evident whether $X$ has a low-rank structure, our second contribution provides a theoretical justification for the low-rank approximability of $X$ under mild assumptions on the nonlinearity $g$ and the right-hand side $\bb$.
Our analysis extends the results presented in \cite[Sec.~2]{KressnerTobler:20112} to the nonlinear setting using the seminal tools developed in \cite{cohen2015approximation}.

Our third contribution consists of the derivation of robust preconditioning strategies, represented by particular choices of the inner product over the space of real matrices, delivering a convergence behavior of Riemannian optimization algorithms that is robust with respect to the sizes $m$ and $n$.
Furthermore, since the nonlinearity typically increases the numerical rank of $X$, we propose and evaluate a rank-compression strategy to reduce the computational cost and to keep the rank growth under control throughout the solution process.

Numerical experiments demonstrate the efficiency of the proposed approach and the overall computational savings with respect to solving each system independently.

\subsection{Outline of the paper}
The remaining part of the paper is organized as follows. Section~\ref{sec:problem_statement} introduces the parametrized nonlinear system and specifies the mathematical assumptions. It further presents our computational framework based on the reformulation of the parametrized nonlinear system as the optimality condition of a suitable optimization problem over the space of real matrices. Section~\ref{sec:low-rank_approximability} analyses the decay of the singular values of the solution matrix, providing the theoretical justification to search for a low-rank approximation.
Section~\ref{sec:geom_fixed_rank_manif} describes the geometry of the manifold of fixed-rank matrices, and briefly recalls Riemannian optimization algorithms. Section~\ref{sec:lowrankcomp} details the computational cost of evaluating functionals, gradients, and Hessian in low-rank formats.
Numerical experiments arising in the context of parametric PDEs are reported in Section~\ref{sec:experiments}.
Finally, we discuss concluding remarks and future directions in Section~\ref{sec:conclusions}.
Further details about lengthy calculations are available in Appendices~\ref{SM:RiemannianHessian} and~\ref{SM:derivative_svd_truncation}.

\subsection{Notation}
Given any two matrices $A,B\in \Rmn$, we denote by $ \langle A,B\rangle\coloneqq\trace(A\tr\! B) $ the standard Frobenius inner product. The symbol $\|\cdot\|$ denotes the Euclidean norm for vectors and the Frobenius norm for matrices. 
For any $n\in\mathbb{N}$, $\{ {\bm e}_j\}_{j=1}^n$ are the canonical vectors of $\R^n$.
The singular values of $A\in \Rmn$ are denoted by $\left\{\sigma_i\right\}_{i=1}^{\min\{m,n\}}$. The Hadamard product is denoted by $A\odot B$ and $A^{\odot k}$ is the Hadamard product of $A$ with itself $k\in\mathbb{N}$ times, also called Hadamard power.    
Furthermore, for any matrix $X\in \Rmn$ with columns $\xb_1,\dots,\xb_n$, we use the symbol $\vecop(\cdot)$ for the column-stacking vectorization operator, i.e.,
\[
   \vecop(X) = \begin{pmatrix}
      \xb_{1} \\
         \vdots \\
      \xb_{n}
   \end{pmatrix} \in \R^{mn}.
\]

\section{Problem formulation}\label{sec:problem_statement}
In this section, we set up the mathematical framework and relate the solution of a parametrized nonlinear system to the minimization of a suitable functional defined over the set of real matrices. We then compute explicitly the gradient and Hessian of this functional, which will be later needed to formulate Riemannian optimization algorithms. 

The goal of this paper is to compute a low-rank approximation to the ensemble of solutions $\left\{\xb^\star(\xib_j)\right\}_{j=1}^n$ of the parametrized nonlinear system
\begin{equation}\label{eq:nonlinear_model_secPB}
	F(\xb(\xib_j),\xib_j) \coloneqq A(\xib_j)\xb(\xib_j)+g(\xb(\xib_j))-\bb(\xib_j)=0,\quad j=1,\dots,n,
\end{equation}
where $ g \colon \R^m\rightarrow \R^m $ is a smooth nonlinear function acting componentwise,
\[
	g(\xb) = \big( g_1((\xb)_1), \dots, g_m((\xb)_m) \big),
\]
and, for every $\xib=(\xi_1,\dots,\xi_p)$ belonging to a compact subset $\Gamma$ of $\R^p$, $p\in \mathbb{N}$, $A(\xib)\in \R^{m\times m}$ and $b(\xib)\in \R^m $. We further make two additional assumptions. The first one concerns the matrices $A(\xib)$.
\begin{assumption}[Affine representation]\label{ass:affine_structure}
For every $\xib\in\Gamma$, $A(\xib)$ admits the affine parametrization
\[A(\xib)=\barA_0+\sum_{k=1}^p \xi_k \barA_k,\]
with matrices $\barA_k\in\R^{m\times m}$, $k=1,\dots,p$.
\end{assumption}
Assumption \ref{ass:affine_structure} on the affine dependence of $A$ on the parameter vector $ \xib $ is standard in both reduced order modeling (see, e.g., \cite{quarteroni2015reduced}) and in random PDEs, where it is typically justified by a Karhunen--Loève expansion of a random field \cite{lord2014introduction}. In our setting, it will be the key to perform operations in a low-rank format.

The second assumption is concerned with the well-posedness of~\eqref{eq:nonlinear_model_secPB}.
There are different frameworks to analyse the well-posedness of a nonlinear system. A sufficient condition is the Minty--Browder theorem \cite[Sec.~9.14]{Ciarlet}, which guarantees that \eqref{eq:nonlinear_model_secPB} admits a unique solution provided that $F$ is a continuous, coercive and strictly monotone function, i.e., for every $\xib\in \Gamma$ and for any two vectors $\xb,\vb\in \R^m$,
\begin{equation}\label{eq:monotonicity}
	(\xb-\vb)\tr\! \left( F(\xb,\xib)-F(\vb,\xib) \right) > 0, \quad\text{and}\quad \lim_{\|\vb\|\rightarrow \infty} \frac{F(\vb)\tr \vb}{\|\vb\|}=\infty.
\end{equation}
Due to the structure of $F$ in \eqref{eq:nonlinear_model_secPB}, \eqref{eq:monotonicity} holds true if, e.g., $A(\xib)$ is symmetric positive definite for every $\xib$, and $g$ is monotone and coercive. We therefore make the following assumption to guarantee existence and uniqueness of the solution of \eqref{eq:nonlinear_model_secPB} for every $\xib\in\Gamma$.
\begin{assumption}[Well-posedness]\label{ass:well-posedness}
For every $\xib\in\Gamma$, $A(\xib)$ is a symmetric and positive definite matrix and $g$ is
a monotone and coercive function.
\end{assumption}

Our computational framework relies on the interpretation of \eqref{eq:nonlinear_model_secPB} as the first-order optimality condition of the optimization problem
\begin{equation}\label{eq:optimizationproblem}
	\min_{\xb_1,\dots,\xb_n\in\R^m} \widetilde{\cF}(\xb_1,\dots,\xb_n) \coloneqq \sum_{j=1}^n m_j \left(\frac{1}{2} \, \xb_j\tr \! A(\xib_j)\xb_j +\unob_m\tr G(\xb_j)-\xb_j\tr \bb(\xib_j)\right),
\end{equation}
where $G$ is a nonlinear function whose $i$-th component is a primitive of $g_i$, $i=1,\dots,m$, $\unob_m=(1,\dots,1)\tr\in\R^m$, and
$\left\{m_j\right\}_{j=1}^n$ are positive weights. Whenever the parameter vector $\xib$ is randomly distributed and the $\{\xib_j\}_{j=1}^n$ are sampled accordingly, it is natural to impose $\sum_{j=1}^n m_j=1$, so that the empirical average in \eqref{eq:optimizationproblem} is an approximation of the expectation over the probability distribution of $\xib$.
A direct calculation taking variations of $\widetilde{\cF}$ with respect to a perturbation $\delta \vb_j$, $j=1,\dots,n$, shows that the unique minimizer $(\xb^\star_1,\dots,\xb^\star_n)$ of $\widetilde{\cF}$ satisfies precisely \eqref{eq:nonlinear_model_secPB}, and thus $\xb^\star_j=\xb^\star(\xib_j)$ for $j=1,\dots,n$.

Next, we reformulate \eqref{eq:optimizationproblem} as an equivalent minimization problem over the space of $\Rmn$ matrices. To this end, let $X\in \Rmn$ and $B\in\Rmn$ be the matrices 
\[
   X =
  \left[
  \begin{array}{cccc}
    \vertbar & \vertbar &   & \vertbar \\[6pt]
       \xb_{1} &    \xb_{2} &  \cdots  &   \xb_{n}  \\
    \vertbar & \vertbar &   & \vertbar 
  \end{array}
  \right], \qquad
  B =
  \left[
  \begin{array}{cccc}
    \vertbar & \vertbar &   & \vertbar \\[6pt]
       \bb(\xib_1) &    \bb(\xib_2) &  \cdots  &   \bb(\xib_n)  \\
    \vertbar & \vertbar &   & \vertbar 
  \end{array}
  \right].
\]
Using the $\vecop(\cdot)$ operator, the first term of \eqref{eq:optimizationproblem} can be written as
\begin{align*}
  \sum_{j=1}^{n} m_j\left( \frac{1}{2} \, \xb_j \tr\! A(\xib_j) \xb_j\right) = \frac{1}{2}\vecop(X)\tr
   \begin{bmatrix}
 m_1 A(\xib_1)             \\
         &   \ddots              \\
         &          &    m_{n}A(\xib_n)
   \end{bmatrix}
   \vecop(X).
\end{align*}
Furthermore, due to Assumption \ref{ass:affine_structure}, we can equivalently write
\[\begin{bmatrix}
 m_1 A(\xib_1) &           \\
         &   \ddots              \\
         &          &   m_{n}A(\xib_n)
   \end{bmatrix}=\sum_{i=0}^{p} \Xi_{i} \otimes \barA_{i},\]
where $ \Xi_{i}\in \R^{n\times n}$ are the diagonal matrices
\[
   \Xi_{0} \coloneqq
   \begin{bmatrix}
   m_1                 \\
         &      \ddots              \\
         &          &    m_{n}
   \end{bmatrix}
   \quad
   \text{and}
   \quad
   \Xi_{i} \coloneqq
   \begin{bmatrix}
   m_1(\xib_1)_i             \\
         &   \ddots              \\
         &     &            m_{n} (\xib_n)_i
   \end{bmatrix},
\]
for $i\in\left\{1,\dots,p\right\}$. 
Hence, the first term of the discretized functional becomes
\begin{equation}\label{eq:first_term_discr_fun}
   \sum_{j=1}^{n} m_{j}\left( \frac{1}{2} \, \xb_j\tr \! A(\xib_j) \xb_j\right) = \frac{1}{2} \vecop(X)\tr \left( \sum_{i=0}^{p} \Xi_{i} \otimes \bar A_{i} \right) \vecop(X).
\end{equation}
By using the matrix relations (see, e.g., \cite[Ch.~10]{petersen2008matrix})
\begin{equation}\label{eq:trick_kronecker}
 \vecop(AXB) = (B\tr \! \otimes A) \vecop(X), \qquad \vecop(C)\tr \! \vecop(D) = \trace\!\big(C\tr\! D \big),
 \end{equation}
which hold for any matrices $A\in \Rmn$, $X\in\R^{n\times t}$, $B\in \R^{t\times s}$ and $C, D\in \R^{m\times n}$, and the symmetry of $\Xi_{i}$, $ i = 0,\dots,p $, we finally derive the equality
\begin{equation}\label{eq:first_term_discr_fun2}
   \frac{1}{2}\sum_{j=1}^{n} m_{j}\left( \, \xb_j\tr \! A(\xib_j) \xb_j\right) = \frac{1}{2} \sum_{i=0}^{p} \trace \! \left( X\tr\! \barA_{i}X\Xi_{i} \right).
\end{equation}
Using the same algebraic relations, the linear term of the discretized functional~\eqref{eq:optimizationproblem} can be written as
\begin{align*}
   -\sum_{j=1}^{n} m_j \xb_j\tr \bb(\xib_j) &= - \left( \xb_1\tr, \ldots, \xb_{n}\tr \right)
   \begin{bmatrix}
       m_1 I_m   \\
         &     \ddots  \\
         &          &   m_{n} I_m
   \end{bmatrix}
   \begin{pmatrix}
      \bb(\xib_1) \\
         \vdots \\
      \bb(\xib_n)
   \end{pmatrix} \\
   &= -\vecop(X)\tr (\Xi_{0} \otimes I_m) \vecop(B) \\
   &= - \trace(X\tr\! B \Xi_{0}),
\end{align*}
$I_m$ being the identity matrix on $\R^m$.
Finally, the nonlinear term is equivalent to
\[
	\sum_{j=1}^{n} m_j G(\xb_j)\tr \unob_m = \vecop(G(X))\tr (\Xi_{0} \otimes I_{m}) \vecop(\unob_m\unob_n\tr) = \trace(G(X)\tr L),
\]
where we set $ L \coloneqq \unob_m\unob_n\tr \Xi_{0} $ and we still denote with $G$ the nonlinear function that acts columnwise on $X$.
Combining all contributions, the functional $\cF$ is defined over the space $\Rmn$ as
\begin{equation}\label{eq:discr_fun_matrix_form}
   \cF(X) \coloneqq \frac{1}{2} \sum_{i=0}^{p} \trace \! \left( X\tr\! \barA_{i}X\Xi_{i} \right) -  \trace \! \big( X\tr\! B \Xi_{0} \big) + \trace \! \big(G(X)\tr\! L \big),
\end{equation}
which is equal to $\widetilde{\cF}(\xb_1,\dots,\xb_n)$ in~\eqref{eq:optimizationproblem}, provided that $ X = [ \xb_1, \dots, \xb_n ] $.

Our methodology involves finding a low-rank solution through the minimization problem
\begin{equation}\label{eq:optproblem_lowrank}
	\min_{X_r\in \cMr} \cF(X_r),
\end{equation}
$ \cMr \coloneqq \left\{X\in \Rmn \colon \rank(X) = r \right\} $ being the set of matrices of fixed rank $ r \in \mathbb{N} $.
In Section \ref{sec:low-rank_approximability}, under additional conditions on the nonlinearity $ g $, we prove a fast decay of the singular values of the solution matrix $ X $, which justifies the search of a low-rank approximation.

We wrap up this section by concisely deriving both the Euclidean gradient and Hessian of $\cF$, which will be used in the Riemannian optimization algorithms to solve \eqref{eq:optproblem_lowrank}. Recall that the Euclidean gradient of $\cF$ is, by definition, the Riesz representative in $\Rmn$ of the directional derivative $ \D \! \cF\in\mathcal{L}(\Rmn,\R)$ with respect to a chosen inner product. It is well known that the choice of the inner product can significantly affect the convergence of iterative schemes; see, e.g., the seminal works on operator preconditioning for PDEs discretizations \cite{hiptmair2006operator,mardal2011preconditioning,kirby2010functional}, and \cite{NW:2006,doi:10.1137/140970860} for optimization algorithms. Here, we introduce the modified inner product over $\Rmn$ as
\begin{equation}\label{eq:scalar_product}
	\langle A,C \rangle_{\cP} \coloneqq \langle K A \Xi_0, C \rangle=\trace(\Xi_0 A\tr \! K C),
\end{equation}
where $ K \in \R^{m\times m} $ is a symmetric positive definite matrix which, for effective preconditioning, should be spectrally equivalent to the matrices $A(\xib_j)$. Note that $\Xi_0$ is a diagonal matrix with strictly positive diagonal entries, hence it is also symmetric positive definite. 
A direct calculation shows that the directional derivative of the nonlinear term at $X$ along a direction $ \delta X \in \Rmn $ is
\begin{equation}\label{eq:derivative_nonlinear}
   \begin{aligned}
      \D\!\trace \! \left(G(X)\tr \! L\right)[\delta X] &= \trace\! \left(\left(g(X)\odot \delta X\right)^\top \unob_m \unob_n\tr \Xi_0\right) \\
      &= (\Xi_0\unob_n)\tr \left(\delta X\tr \! \odot g(X)^\top \right) \unob_m \\
      & = \trace\!\left(\delta X\tr g(X) \Xi_0\right),
   \end{aligned}
\end{equation}
where in the last step we used the relation \cite[Lemma~7.5.2]{HornJohnson:2013}
\begin{equation}\label{eq:hamadard_and_trace}
{\bm a}^\top ( B\odot C) {\bm d}=\trace\! \left (\diag({\bm a})\tr B \diag({\bm d}) C\tr\right),
\end{equation}
which holds true for any real vectors ${\bm a}\in\R^m$, ${\bm d}\in \R^n$, and real matrices $B$, $C\in\Rmn$.
It follows then that the directional derivative of $\cF$ at $X$ along $ \delta X \in \Rmn $ is
\[
	\D\!\cF(X)[\delta X] = \sum_{i=0}^p \trace(\delta X\tr \! \barA_i X\Xi_i) -\trace(\delta X\tr \! B\Xi_0)+\trace(\delta X\tr \! g(X)\Xi_0),
\]
implying that the gradient of $\cF$ at $X$ expressed with respect to the scalar product \eqref{eq:scalar_product} can be computed by solving the matrix equation
\begin{equation}\label{eq:gradient}
    K \, \bigl(\nabla\cF(X)\bigr) \, \Xi_0 =  \sum_{i=0}^{p} \barA_i X \Xi_i - B\Xi_0 + g(X)\Xi_0.
\end{equation}
This matrix equation can be solved efficiently by precomputing, once and for all, a Cholesky decomposition of $K$, and by directly inverting the diagonal matrix $\Xi_0$. Similarly, the directional derivative of the gradient is computed by taking variations of $ \nabla\cF(X) $ along a direction $\delta X$. A direct calculation leads to the matrix equation
\begin{equation}\label{eq:directional_derivative_gradient}
	K \, \bigl( \D \! \nabla \cF(X)[\delta X] \bigr) \, \Xi_0 = \sum_{i=0}^{p} \barA_i \delta X \Xi_i + (g'(X)\odot \delta X) \Xi_0.
\end{equation}

\section{Low-rank approximability}\label{sec:low-rank_approximability}
In this section, we argue that, under additional regularity assumptions on $g$ and $\bb$, the (possibly full-rank) solution matrix $X$ is well approximated by a low-rank matrix. To this end, we show that the solution map $\xib\mapsto \xb(\xib)$ admits a holomorphic extension to a tensorized Bernstein polyellipse in the complex plane. This in turn permits to derive sharp bounds on the decay of the coefficients of $\xb(\xib)$ expressed in a polynomial basis, leading finally to an estimate of the decay of the singular values of $X$.

We start by proving that the solution map $ \xb \colon \Gamma\ni \xib \mapsto \xb(\xib)\in \R^{m} $ admits a holomorphic extension to an open set $\cO\subset \C^p$ that contains $\Gamma$. Without loss of generality, we assume in this section that $\Gamma=[-1,1]^p$.

\begin{proposition}[Holomorphic extension]\label{proposition:holomorphic_extension}
Let Assumptions \ref{ass:affine_structure} and \ref{ass:well-posedness} hold. Furthermore, assume that $ g_j \colon \R \rightarrow \R $, $j=1,\dots,m$, and $ \bb \colon \R^{p}\rightarrow \R^m $ can be extended to holomorphic maps from $ \C $ to $ \C $ and from $ \C^p$ to $ \C^m$, respectively.  
Then, there exists an open set $\cO\subset \C^p$ containing $\Gamma$, such that the map $\xb \colon \Gamma\rightarrow \R^{m}$ has a holomorphic extension to $\cO$ and satisfies the uniform bound
\begin{equation}\label{eq:uniform_bound}
	\|\xb\|_{L^\infty(\cO,\C^m)}=\sup_{\xib\in\cO}\|\xb(\xib)\|_{\C^m} \eqqcolon C_{\cO}<\infty.
\end{equation}
\end{proposition}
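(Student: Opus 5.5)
The plan is to use the holomorphic implicit function theorem applied to $F$. Extend $F$ to $\C^m\times\C^p$ by
\[
F(\xb,\xib)=\barA_0\xb+\sum_{k=1}^p\xi_k\barA_k\xb+g(\xb)-\bb(\xib).
\]
By Assumption~\ref{ass:affine_structure} and the hypotheses on $g_j$ and $\bb$, this map is holomorphic jointly in $(\xb,\xib)\in\C^m\times\C^p$. Its partial Jacobian in $\xb$ is
\[
\partial_{\xb}F(\xb,\xib)=A(\xib)+\diag\bigl(g_1'((\xb)_1),\dots,g_m'((\xb)_m)\bigr).
\]

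First, fix a real point $\xib_0\in\Gamma$. By Assumption~\ref{ass:well-posedness} and the Minty--Browder argument recalled before it, $F(\cdot,\xib_0)=0$ has a unique real solution $\xb(\xib_0)$. Monotonicity of the smooth scalar functions $g_j$ gives $g_j'\ge 0$ on $\R$. Hence $\partial_{\xb}F(\xb(\xib_0),\xib_0)$ is the sum of an SPD matrix and a nonnegative diagonal matrix, so it is SPD and in particular invertible. The holomorphic implicit function theorem then yields a complex neighborhood $U_{\xib_0}\subset\C^p$ of $\xib_0$, a neighborhood $V_{\xib_0}\subset\C^m$ of $\xb(\xib_0)$, and a holomorphic map $\tilde\xb_{\xib_0}\colon U_{\xib_0}\to V_{\xib_0}$ whose graph is the zero set of $F$ in $U_{\xib_0}\times V_{\xib_0}$. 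Shrinking $U_{\xib_0}$ to a ball with compact closure inside the original one, I can assume $\tilde\xb_{\xib_0}$ is bounded on it.

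Second, I glue the local extensions. On $U_{\xib_0}\cap\R^p\cap\Gamma$, the local solution coincides with the real solution $\xb(\xib)$. This follows from local uniqueness in the implicit function theorem combined with continuity of the real solution map; the latter comes from the real implicit function theorem, since the Jacobian is invertible at every real point. By compactness of $\Gamma$, finitely many balls $U_1,\dots,U_N$ cover $\Gamma$, and I set $\cO$ to be their union, possibly shrunk. On an overlap $U_i\cap U_j$ that meets $\Gamma$, the two local extensions agree on a set containing a real open subset of $\R^p$, which is a uniqueness set for holomorphic functions of several variables. By the identity theorem they then agree on the connected component of $U_i\cap U_j$ containing it.

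The main obstacle is overlaps that do not meet $\R^p$, or disconnected intersections, where the identity theorem gives nothing. I would handle this with a Lebesgue-number argument. Take $\delta>0$ such that every real ball of radius $2\delta$ centered in $\Gamma$ lies in some $U_i$, and define $\cO$ as the $\delta$-neighborhood of $\Gamma$ in $\C^p$, intersected appropriately. Then any two overlapping local pieces both contain a convex (hence connected) set that touches $\Gamma$, on which they agree. This produces a well-defined single-valued holomorphic extension on $\cO$.

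Finally, the uniform bound~\eqref{eq:uniform_bound} follows by taking $C_{\cO}=\max_i\sup_{U_i}\|\tilde\xb_i\|$, which is finite because each $\tilde\xb_i$ was made bounded by the earlier shrinking. Note that this argument is only local. It does not quantify the size of $\cO$ (for example, as a polyellipse), which the proposition does not require.
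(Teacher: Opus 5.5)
Your proposal is correct and follows the paper's route. The paper checks exactly your ingredients: joint holomorphy of $F$, a unique real solution for each $\xib\in\Gamma$, and invertibility of $A(\xib)+D(\xb(\xib))$ from $g_j'\geq 0$. It then invokes \cite[Thm.~2.5]{cohen2015approximation}, whose content is the local holomorphic implicit function theorem plus the compactness and identity-theorem gluing that you carry out by hand.

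The only fix needed is in your Lebesgue-number step. The $2\delta$-balls must be taken in $\C^p$, not $\R^p$. Well-definedness at $z\in B(\xib,\delta)\cap B(\xib',\delta)$ should then be checked on the convex set $B(\xib,2\delta)\cap B(\xib',2\delta)\subset U_i\cap U_j$. This set contains $z$ and an open subset of $\Gamma$ on which both local solutions equal $\xb$.
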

\begin{proof}
The proof consists of verifying the hypotheses of \cite[Thm.~2.5]{cohen2015approximation}, a powerful result based on the holomorphic version of the implicit function theorem in Banach spaces. To this end, we consider the map $F:\C^m\times \C^p \rightarrow \C^m$ defined as
\[
	F(\xb,\xib) \coloneqq A(\xib)\xb + g(\xb) - \bb(\xib).
\]
For every $\xib\in \Gamma$ there exists a unique $\xb(\xib)$ such that $F(\xb(\xib),\xib)=0$, due to Assumption \ref{ass:well-posedness}, and $F$ is holomorphic due to Assumption \ref{ass:affine_structure} and the holomorphicity hypothesis on $g$ and $b$.
In addition, for every $\xib \in \Gamma$,
\[
	\partial_{\xb} F(\xb(\xib),\xib) = A(\xib) + D(\xb(\xib)),
\]
where $ D(\xb(\xib)) = \diag \! \big(g_1'((\xb(\xib))_1),\dots,g_m'((\xb(\xib))_m) \big) $.
Note that $ g_j'(t)\geq 0 $ for every $ t\in \R $ and $j=1,\dots,m$ since $ \{g_j\}_{j=1}^m $ are scalar monotone functions, and thus $ \partial_{\xb} F(\xb(\xib),\xib) $ is an isomorphism from $\R^m$ to $\R^m$ for every $\xib\in\Gamma$.
The claim then follows from \cite[Thm.~2.5]{cohen2015approximation}.
\end{proof}

Next, we show that we can embed into $ \cO $ the Bernstein filled-in polyellipse
\[
	\cH_{\rhob} \coloneqq \otimes_{k=1}^p \cH_{\rho_k}, \quad \text{where}\quad \cH_{\rho_k} \coloneqq \left\{\frac{z+z^{-1}}{2} \colon z\in \C,\; 1\leq |z|\leq \rho_k\right\},
\]
for a suitable choice of the positive real numbers $\{\rho_k\}_{k=1}^p$, with $\rho_k>1$ for every $k=1,\dots,p$. 

\begin{proposition}\label{proposition:BernsteinPolyEllipse}
Let the assumptions of Proposition \ref{proposition:holomorphic_extension} hold.
Then, there exists a $\delta>0$ such that for any sequence $\{\rho_k\}_{k=1}^p$ of real numbers strictly larger than 1 and satisfying
\begin{equation}\label{eq:propositionBernstein}
	\sum_{k=1}^p (\rho_k-1)\|A_k\|\leq \delta,
\end{equation}
the solution map $\xib\mapsto \xb(\xib)$ admits a holomorphic extension over the filled-in ellipse $\cH_{\rhob}$ with the uniform bound \eqref{eq:uniform_bound}.
\end{proposition}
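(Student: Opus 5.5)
The strategy is to show that the open set $\cO$ from Proposition~\ref{proposition:holomorphic_extension} contains a uniform complex neighbourhood of $\Gamma$, and that $\cH_{\rhob}$ lies inside it once \eqref{eq:propositionBernstein} holds. Here $\|A_k\|$ is read as $\|\barA_k\|$. I will not use $\cO$ as a black box. Instead I will reopen the implicit-function argument behind \cite[Thm.~2.5]{cohen2015approximation} to obtain an explicit radius measured in the "size" of the complex perturbation $A(\xib)-A(\mathrm{Re}\,\xib)$.

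First, a geometric fact about the polyellipse. For $\zeta=(z+z^{-1})/2$ with $1\le|z|\le\rho_k$, the distance from $\zeta$ to $[-1,1]$ is at most $(\rho_k-\rho_k^{-1})/2\le \rho_k-1$ for $\rho_k$ close to $1$. Each $\xib\in\cH_{\rhob}$ can therefore be written as $\xib=\xib_0+\bm\eta$ with $\xib_0\in\Gamma$ and $|\eta_k|\le \rho_k-1$. Consequently
\[
\|A(\xib)-A(\xib_0)\|\le \sum_{k=1}^p|\eta_k|\,\|\barA_k\|\le\sum_{k=1}^p(\rho_k-1)\|\barA_k\|\le\delta .
\]
Combined with holomorphy of $\bb$ on $\C^p$, this also bounds $\|\bb(\xib)-\bb(\xib_0)\|$ uniformly, since $\bb$ is uniformly continuous on compact neighbourhoods of $\Gamma$.

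Second, uniform invertibility. By compactness of $\Gamma$ and continuity of $\xib\mapsto\xb(\xib)$ (from Proposition~\ref{proposition:holomorphic_extension}), the Jacobians $J(\xib_0)=A(\xib_0)+D(\xb(\xib_0))$ are SPD with smallest eigenvalue bounded below by $\alpha:=\min_{\Gamma}\lambda_{\min}(A)>0$. Hence $\|J(\xib_0)^{-1}\|\le\alpha^{-1}$ uniformly in $\xib_0\in\Gamma$. Moreover, $g$ is holomorphic, so on a fixed compact complex neighbourhood of the compact set $\{\xb(\xib_0)\}$ it has uniformly bounded second derivatives.

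Third, the main step and expected obstacle: a quantitative implicit function theorem giving a radius independent of $\xib_0$. For fixed $\xib$ near $\xib_0$, I will apply a Newton–Kantorovich (contraction) argument to the map
\[
\Phi(\bm y)=\bm y-J(\xib_0)^{-1}F(\bm y,\xib)
\]
on a ball $B(\xb(\xib_0),r)\subset\C^m$. The residual satisfies $\|F(\xb(\xib_0),\xib)\|\lesssim\|A(\xib)-A(\xib_0)\|\,C+\|\bb(\xib)-\bb(\xib_0)\|$, and the Lipschitz defect of $\Phi$ is $\alpha^{-1}\bigl(\delta+\sup\|D(\bm y)-D(\xb(\xib_0))\|\bigr)\lesssim\alpha^{-1}(\delta+Lr)$. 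With $r$ and then $\delta$ chosen small, uniformly in $\xib_0$, $\Phi$ is a contraction of the ball. This yields a unique solution $\xb(\xib)$, bounded by $\max_\Gamma\|\xb\|+r$, which gives \eqref{eq:uniform_bound}.

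The delicate point is that $\bb$ must also move little. This can be handled by shrinking the $\rho_k$, or by noting that the set where \eqref{eq:propositionBernstein} holds has $\max_k(\rho_k-1)$ controlled only when $\|\barA_k\|\neq0$; otherwise one absorbs $\bb$'s variation into the choice of $\delta$ together with a cap such as $\rho_k\le2$. Holomorphy of the resulting map follows from the holomorphic implicit function theorem, since the Jacobian remains invertible throughout. The local solutions patch together by uniqueness inside the balls, giving the extension to $\cH_{\rhob}$.
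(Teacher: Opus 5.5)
\paragraph{Comparison with the paper.} Your route differs from the paper's. The paper transplants \cite[Thm.~2.9]{cohen2015approximation}. Compactness of $\cA(\Gamma)$ gives a $\delta$ with $\bigcup_{A\in\cA(\Gamma)}B(A,\delta)\subset\cA(\cO)$, where $\cO$ is the black-box domain of Proposition~\ref{proposition:holomorphic_extension}. Then, as in your first step, $\cH_{\rho_k}\subset\{z:\mathrm{dist}(z,[-1,1])\le\rho_k-1\}$ gives $\|A(\zb)-A(\xib)\|\le\delta$, and holomorphy is concluded from $A(\zb)\in\cA(\cO)$.

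You instead reopen the implicit function theorem and run a contraction around each real solution $\xb(\xib_0)$. Your constants are uniform over $\Gamma$: $\alpha=\min_\Gamma\lambda_{\min}(A)$, a bound on $g''$ near the compact set $\{\xb(\xib_0)\}$, and Neumann-series invertibility of the Jacobian. This part is sound and gives an explicit $\delta$. It also exposes something the paper's argument hides. The paper only controls $A(\zb)$, as if the solution depended on the parameter through the matrix alone, and never addresses $\bb(\zb)$.

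\paragraph{The gap: control of $\bb$.} That is exactly where your proposal breaks. The contraction needs the residual $\|F(\xb(\xib_0),\xib)\|$, hence $\|\bb(\xib)-\bb(\xib_0)\|$, to be \emph{small} compared with $\alpha r$, not merely bounded. The bound $\|A(\xib)-A(\xib_0)\|\le\delta$ by itself gives no such smallness. Smallness follows only if \eqref{eq:propositionBernstein} forces every $\rho_k-1$ to be small, i.e.\ if $\barA_k\neq0$ for all $k$. In that case, choose $\delta$ small relative to $\min_k\|\barA_k\|$ and the Lipschitz constant of $\bb$ near $\Gamma$, and your argument is complete.

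Your fallback fails. A cap such as $\rho_k\le 2$, with the variation of $\bb$ ``absorbed into $\delta$'', does not help: when $\barA_k=0$, $\delta$ does not constrain $\eta_k$. The residual then stays of order one, and $\Phi$ need not map the ball into itself.

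No argument can repair this case. Take $m=p=1$, $A\equiv 1$ (so $\barA_1=0$), $g(x)=x^3$, $b(\xi)=\xi$; every $\rho>1$ satisfies \eqref{eq:propositionBernstein}.
\begin{itemize}
\item Any holomorphic extension $\tilde x$ satisfies $\tilde x+\tilde x^3=\xi$ by the identity theorem, hence $\tilde x'(1+3\tilde x^2)=1$.
\item Continuing the real branch along $\xi=it$ forces $\tilde x=iy$ with $y-y^3=t$ and $y\in[0,1/\sqrt3)$. This reaches $\tilde x=i/\sqrt3$, where $1+3\tilde x^2=0$, at $\xi=2i/(3\sqrt3)$.
\item That point is interior to $\cH_\rho$ once $(\rho-\rho^{-1})/2>2/(3\sqrt3)$.
\end{itemize}
So you should prove the statement under the added hypothesis $\barA_k\neq 0$ for all $k$, or with a separate cap $\rho_k-1\le\varepsilon_0$ with $\varepsilon_0$ depending on $\bb$. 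Do not claim a patch of the statement as written.

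\paragraph{A minor point on patching.} Uniqueness in each ball separately is not quite enough. Show that the fixed point lies in $B(\xb(\xib_0),r/2)$ while uniqueness holds in $B(\xb(\xib_0),r)$. Then solutions built from nearby base points coincide, and the extension is locally a single holomorphic branch.
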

\begin{proof}
The proof is essentially that of \cite[Thm.~2.9]{cohen2015approximation} and here adapted for completeness. Consider the sets 
\[
	\cA(\Gamma)\coloneqq\left\{A\in\R^{m\times m} \colon A=A_0+\sum_{k=1}^p \xi_k A_k,\; \xi_k\in [-1,1]\right\},
\]
and
\[
	\cA(\cO)\coloneqq\left\{A\in\C^{m\times m} \colon A=A_0+\sum_{k=1}^p z_k A_k,\; \zb\coloneqq(z_1,\dots,z_p)\tr \in \cO\right\}.
\]
Due to the compactness of $\cA(\Gamma)$, there exists a $\delta>0$ such that $\cup_{A\in \cA(\Gamma)} B(A,\delta)\subset \cA(\cO)$, $B(A,\delta)$ being the ball of radius $\delta$ centered in $A$. Then, define 
\[
	\cO_{\rho_k}\coloneqq \left\{z\in \C \colon \text{dist}(z,[-1,1]) \coloneqq \min_{y\in [-1,1]} |z-y|\leq \rho_k-1 \right\},
\]
with $\rho_k>1$, $k\in\{1,\dots,p\}$, and note that $\cH_{\rho_k}\subset \cO_{\rho_k}$.
By definition, for every $\zb\in \cO_{\rhob}\coloneqq\otimes_{k=1}^p \cO_{\rho_k}$, there exists a $\xib\in \Gamma$ such that $|\xi_k-z_k|\leq \rho_k-1$. Thus, for every $\zb\in\cO_{\rhob}$, we have
\[
	A(\zb) = A(\xib) + \sum_{k=1}^p (z_k-\xi_k)A_k.
\]
Using \eqref{eq:propositionBernstein}, it then follows that for every $\zb\in\cO_{\rhob}$, $A(\zb)\in \cA(\cO)$ and hence the map $\zb\mapsto \xb(\zb)$ is holomorphic and thus, in particular, also over $\cH_{\rhob}$, concluding the proof.
\end{proof}
We now explicitly construct a low-rank approximant of $X$. To do so, we first consider the multivariate Chebyshev expansion of $\xb$,
\[
	\xb(\xib) = \sum_{\qb\in \mathbb{N}^p} \cb_{\qb}T_{\qb}(\xib) \quad\text{with}\quad T_{\qb}(\xib) \coloneqq T_{q_1}(\xi_1)\cdots T_{q_p}(\xi_p),
\]
$T_{q_k}(\xi)$ being the Chebyshev polynomials of the first kind and 
\[
	\cb_{\qb}\coloneqq\int_{\Gamma} \xb(\xib)T_{\qb}(\xib)\; \mathrm{d}\xib.
\]
The next lemma summarizes well-known results concerning the decay of the Chebyshev coefficients for functions admitting holomorphic extension over $\cH_{\rhob}$.
\begin{lemma}
Let $\xb$ admit a holomorphic extension over the Bernstein polyellipse $ \cH_{\rhob}$ with the uniform bound \eqref{eq:uniform_bound}. Then, the coefficients $\cb_{\qb}\in \R^m$ satisfy
\[\|\cb_{\qb}\|\leq C_{\mathrm{Cheb}} \prod_{k=1}^p \rho_ke^{- q_k},\]
where $ C_{\mathrm{Cheb}} = 2^{\|\qb\|_0/2} C_{\cO}$, with $C_{\cO}$ defined in \eqref{eq:uniform_bound}, and $\|\qb\|_0 \coloneqq |\{j \colon q_j\neq 0\}|$.
\end{lemma}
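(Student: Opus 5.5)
The plan is to reduce the multivariate estimate to the classical one-dimensional argument of Bernstein, applied one variable at a time through the Joukowski map $J(z)=(z+z^{-1})/2$. Two conventions need fixing first.

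\emph{Reading of the bound.} I read the bound as $\|\cb_{\qb}\|\leq C_{\mathrm{Cheb}}\prod_{k=1}^p \rho_k^{-q_k}$; this is the standard geometric decay for functions holomorphic on $\cH_{\rhob}$, and I will aim for it.

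\emph{Normalization of the coefficients.} The constant $2^{\|\qb\|_0/2}$ only works if the coefficients $\cb_{\qb}$ are taken with respect to the Chebyshev weight $\prod_k(1-\xi_k^2)^{-1/2}/\pi$ and the $T_{q_k}$ are normalized so that $\|T_0\|=1$ and $\|T_q\|=1/\sqrt2$ for $q\geq1$ (orthonormal up to that factor). Each nonzero index then contributes a factor $\sqrt2$ in the one-dimensional bound. I will make this normalization explicit at the start.

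\emph{One-dimensional step.} Fix all variables but $\xi_1$. Substituting $\xi_1=\cos\theta=J(e^{i\theta})$ turns the coefficient integral in $\xi_1$ into the Fourier-type integral
\[
\frac{1}{2\pi}\int_0^{2\pi}\xb(\cos\theta,\dots)\cos(q_1\theta)\,\mathrm{d}\theta .
\]
Writing $\cos(q_1\theta)=(z^{q_1}+z^{-q_1})/2$ with $z=e^{i\theta}$, this becomes a contour integral over $|z|=1$ of $\xb(J(z),\dots)(z^{q_1}+z^{-q_1})/z$. The function $z\mapsto \xb(J(z),\dots)$ is holomorphic on the annulus $\rho_1^{-1}<|z|<\rho_1$, because $J$ maps that annulus into $\cH_{\rho_1}$, where Proposition~\ref{proposition:BernsteinPolyEllipse} gives holomorphy. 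It is also invariant under $z\mapsto z^{-1}$, so the two halves of the integrand coincide. I can therefore keep only the $z^{-q_1}$ term and push the contour out to $|z|=\rho'<\rho_1$ by Cauchy's theorem. This gives a bound by $\sup_{\cH_{\rho_1}}\|\xb\|\,(\rho')^{-q_1}$, and letting $\rho'\to\rho_1$ (or working on a slightly larger ellipse) yields $\rho_1^{-q_1}$. The factor $\sqrt2$ from the normalization of $T_{q_1}$ appears exactly when $q_1\neq0$.

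\emph{Tensorization.} Since $\cH_{\rhob}$ is a product set and $\xb$ is separately holomorphic in each $z_k$ on it, I apply the same contour deformation in each variable successively, using Fubini to perform the integrals iteratively. After all $p$ deformations, the integrand is bounded by $\sup_{\cH_{\rhob}}\|\xb\|\leq C_{\cO}$ from \eqref{eq:uniform_bound}. The result is $\|\cb_{\qb}\|\leq 2^{\|\qb\|_0/2}C_{\cO}\prod_k\rho_k^{-q_k}$.

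\emph{Main obstacle.} The delicate point is not the estimate itself but keeping the normalization constants consistent with the stated $C_{\mathrm{Cheb}}$. The other technical point is the limit $\rho'\to\rho_k$ on the boundary of the closed ellipse. If holomorphy is only available on the open interior, I would first apply Proposition~\ref{proposition:BernsteinPolyEllipse} with slightly larger $\rho_k$, which \eqref{eq:propositionBernstein} permits since it is an open condition up to the choice of $\delta$. Alternatively, I would accept the bound with any $\tilde\rho_k<\rho_k$ and take the limit.
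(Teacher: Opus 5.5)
Your proposal is correct and is the classical Bernstein argument that the paper invokes by citing \cite[Thm.~8.1]{devore1993constructive} and its tensorized version \cite[Thm.~3.2]{adcock2022sparse}: Joukowski substitution, symmetry under $z\mapsto z^{-1}$, and Cauchy deformation of each contour to $|z_k|=\rho_k'<\rho_k$. Your reading of the decay factor as $\prod_k \rho_k^{-q_k}$ is also the intended one.

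The one thing to tighten is where the $\sqrt{2}$ comes from. Your contour bound on $\frac{1}{2\pi}\int_0^{2\pi}\xb(\cos\theta,\dots)\cos(q_1\theta)\,\mathrm{d}\theta$ carries no $\sqrt{2}$ at all. The factor $2^{\|\qb\|_0/2}$ appears only when $\cb_{\qb}$ is the coefficient with respect to the orthonormal polynomials $2^{\|\qb\|_0/2}T_{\qb}$, which is the convention of the cited reference. By contrast, expansion coefficients with respect to the classical $T_q$ (your stated normalization $\|T_q\|=1/\sqrt{2}$) carry a factor $2$ per nonzero index. With that convention the stated constant fails: take $p=m=1$, $x(\xi)=\xi$ and $\rho=2$, where the coefficient is $1$ but the bound is $5\sqrt{2}/8$.
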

\begin{proof}
The proof is a direct generalization to the $p$-dimensional case of \cite[Thm.~8.1]{devore1993constructive}; see \cite[Thm.~3.2]{adcock2022sparse} for a complete proof.
\end{proof}

Let $ \cb = (\cb_{\qb})_{\qb\in \mathbb{N}^p}$ be the sequence collecting all Chebyshev coefficients, and for any integer $\ell\in \mathbb{N}$, we denote with $\Lambda_{\ell}\subset\mathbb{N}^p$ the set of the $\ell$ largest ones. The best $\ell$-term approximation of $\xb$ is defined as
\begin{equation}\label{eq:besttermapproximation}
	\widetilde{\xb}(\xib)\coloneqq\sum_{\qb\in \Lambda_{\ell}} c_{\qb} T_{\qb}(\xib),
\end{equation}
and we set
\[
	\widetilde{X}\coloneqq\left[ \widetilde{\xb}(\xib_1),\dots,\widetilde{\xb}(\xib_n)\right]=\begin{pmatrix} \cb_{\qb_1}, &\dots &,\cb_{\qb_\ell}\end{pmatrix}\begin{pmatrix}
T_{\qb_1}(\xib_1) &\cdots & T_{\qb_1}(\xib_n)\\
\vdots & \vdots & \vdots\\
T_{\qb_\ell}(\xib_1) &\cdots & T_{\qb_\ell}(\xib_n)
\end{pmatrix} \in \Rmn,
\]
which is a matrix of rank $\ell$.

\begin{theorem}\label{thm:lowrankapproximability}
Let the assumptions of Proposition \ref{proposition:holomorphic_extension} hold. Then, for any $s\in (0,1]$, there exists a constant $\widetilde{C} = \widetilde{C}(p,s,\rhob) $ such that
\begin{equation}
	\|X-\widetilde{X}\|_{\F} \leq \sqrt{n}\widetilde{C}C_{\cO}\|\cb\|_{s}(\ell+1)^{-\left(\frac{1}{s}-1\right)},
\end{equation}
which implies that the $\ell$-th singular value of $X$ satisfies
\begin{equation}\label{eq:decayellsingularvalue}
	\sigma_{\ell}\leq \sqrt{n}\widetilde{C}C_{\cO}\|\cb\|_{s}(\ell+1)^{-\left(\frac{1}{s}-1\right)}.
\end{equation}
\end{theorem}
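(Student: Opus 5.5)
The plan is to bound the error columnwise and then apply Stechkin's inequality to the best $\ell$-term truncation. Since $X-\widetilde{X}=[\xb(\xib_1)-\widetilde{\xb}(\xib_1),\dots,\xb(\xib_n)-\widetilde{\xb}(\xib_n)]$, we have
\[
\|X-\widetilde{X}\|_{\F}^2=\sum_{j=1}^n\|\xb(\xib_j)-\widetilde{\xb}(\xib_j)\|^2\leq n\sup_{\xib\in\Gamma}\|\xb(\xib)-\widetilde{\xb}(\xib)\|^2 .
\]
This reduces the claim to a uniform bound on the truncation error. Because $|T_{q_k}(\xi)|\leq 1$ on $[-1,1]$, the triangle inequality gives
\[
\sup_{\xib\in\Gamma}\|\xb(\xib)-\widetilde{\xb}(\xib)\|\leq\sum_{\qb\notin\Lambda_\ell}\|\cb_{\qb}\| .
\]
For this step I need the Chebyshev series to converge absolutely and uniformly on $\Gamma$. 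That follows from the preceding lemma, which gives geometric decay of $\|\cb_{\qb}\|$ in every coordinate, so the coefficient sequence is summable.

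The second step is Stechkin's inequality for the nonincreasing rearrangement of $(\|\cb_{\qb}\|)_{\qb}$. For $0<s\leq 1$,
\[
\sum_{\qb\notin\Lambda_\ell}\|\cb_{\qb}\|\leq\|\cb\|_{s}(\ell+1)^{-(1/s-1)},
\]
with the convention $\|\cb\|_s=(\sum_{\qb}\|\cb_{\qb}\|^s)^{1/s}$. I will reprove this in one line. Let $c^*_k$ be the decreasing rearrangement. Then $c^*_k\leq k^{-1/s}\|\cb\|_s$, so the tail satisfies $\sum_{k>\ell}c^*_k\leq (c^*_{\ell+1})^{1-s}\sum_k (c^*_k)^s\leq \|\cb\|_s^{1-s}(\ell+1)^{-(1-s)/s}\|\cb\|_s^s$. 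To get an explicit $\widetilde{C}(p,s,\rhob)$ and the factor $C_{\cO}$, I will also show that $\|\cb\|_s<\infty$ for every $s$, using the lemma: $\sum_{\qb}\|\cb_{\qb}\|^s\leq C_{\cO}^s\,2^{ps/2}\prod_k\sum_{q\geq0}(\rho_k e^{-q})^{s}$, which is a finite product of geometric sums. Normalizing $\|\cb\|_s$ relative to $C_{\cO}$ then absorbs the dependence on $p,s,\rhob$ into $\widetilde{C}$, which yields the stated form $\sqrt n\,\widetilde{C}C_{\cO}\|\cb\|_s(\ell+1)^{-(1/s-1)}$. This step involves the most bookkeeping, and the constant need not be sharp, since the statement only asserts existence.

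Finally, $\widetilde{X}$ factors as an $m\times\ell$ matrix times an $\ell\times n$ matrix, so $\rank\widetilde{X}\leq\ell$. By the Eckart--Young--Mirsky theorem, $\sigma_{\ell+1}(X)\leq\big(\sum_{i>\ell}\sigma_i^2\big)^{1/2}=\min_{\rank Y\leq\ell}\|X-Y\|_{\F}\leq\|X-\widetilde{X}\|_{\F}$. Since $\sigma_\ell\leq\sigma_{\ell-1}$, applying the bound with $\ell-1$ terms, or simply the monotone version as written up to the constant, gives \eqref{eq:decayellsingularvalue}. The index shift can be absorbed into $\widetilde{C}$, because $(\ell+1)/\ell\leq 2$.

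I expect the main obstacle to be making the interplay of constants consistent, namely $C_{\mathrm{Cheb}}$ depending on $\|\qb\|_0$ and $\|\cb\|_s$ versus $C_{\cO}$. The analytic core, Stechkin plus Eckart--Young, is routine once uniform convergence of the expansion is justified from the coefficient decay.
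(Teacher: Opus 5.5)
Your route is the paper's: reduce columnwise to the tail $\sum_{\qb\notin\Lambda_\ell}\|\cb_{\qb}\|$ via $|T_{\qb}|\le 1$, apply Stechkin, then Eckart--Young. You additionally prove Stechkin's inequality and $\cb\in\ell^s$ from the coefficient lemma instead of citing them. Your last step is also more careful than the paper's. The best rank-$\ell$ error controls $\sigma_{\ell+1}$, not $\sigma_\ell$; the paper's inequality $\sigma_\ell\le\min_{\rank\widehat X=\ell}\|X-\widehat X\|_{\F}$ fails whenever $\rank X=\ell$. Passing to the $(\ell-1)$-term truncation costs only the factor $((\ell+1)/\ell)^{1/s-1}\le 2^{1/s-1}$, which can indeed be absorbed into $\widetilde C(p,s,\rhob)$. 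The phrase ``since $\sigma_\ell\le\sigma_{\ell-1}$'' plays no role there and can be dropped.

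The step that does not go through is the ``normalization'' producing the product $\widetilde C C_{\cO}\|\cb\|_s$. What you have actually proved is $\sum_{\qb\notin\Lambda_\ell}\|\cb_{\qb}\|\le\|\cb\|_s(\ell+1)^{-(1/s-1)}$. Separately, you have $\|\cb\|_s\le\widetilde C C_{\cO}$ with $\widetilde C=2^{p/2}\prod_{k=1}^p\bigl(\sum_{q\ge0}(\rho_k e^{-q})^s\bigr)^{1/s}$. Together these give the bound with $\|\cb\|_s$ alone, or with $\widetilde C C_{\cO}$ alone. The product form would additionally need $\widetilde C C_{\cO}\ge 1$, which no constant depending only on $(p,s,\rhob)$ can ensure. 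In fact the printed right-hand side is quadratic in the data while the left-hand side is linear. Take $g\equiv 0$ and replace $\bb$ by $\lambda\bb$, with $\cO$ and $\rhob$ fixed. Then $X-\widetilde X$ and $\sigma_\ell$ scale like $\lambda$, whereas $C_{\cO}\|\cb\|_s$ scales like $\lambda^2$, so both inequalities fail for small $\lambda$ in any nondegenerate case. The paper's proof asserts the product form at exactly this point without justification, so the defect lies in the statement, not in your strategy. State the bound with either $\|\cb\|_s$ or $\widetilde C C_{\cO}$ in front, and your argument then proves it completely.
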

\begin{proof}
We start by observing that
\begin{equation}\label{eq:ranklapproximability}
	\|X-\widetilde{X}\|_{\F}^2=\sum_{j=1}^n \|\xb(\xib_j)-\widetilde{\xb}(\xib_j)\|^2\leq \sum_{j=1}^n \left(\sum_{\qb\notin \Lambda_{\ell}} \|\cb_{\qb}\| |T_{\qb}(\xib_j)|\right)^2\leq n\left(\sum_{\qb\notin\Lambda_{\ell}} \|\cb_{\qb}\|\right)^2,
\end{equation}
where we used that $|T_{\qb}(\xib)|\leq 1$ for any $\xib\in \Gamma$. We are thus left to study the best-$\ell$ approximation rates of $\xb$ via multivariate Chebyshev polynomials, for which there is a vast literature of results available. In particular, one can show  (see, e.g., \cite[Lemma 3.5]{adcock2022sparse}) that $\cb\in \ell^s(\mathbb{N}^p)$ for any $0<s<\infty$ and, due to Stechkin's lemma, for any $ 0<s\leq 1 $, it holds that
\[
	\sum_{\qb\notin\Lambda_{\ell}} \|\cb_{\qb}\|\leq \widetilde{C}C_{\cO}\|\cb\|_{s}(\ell+1)^{-\left(\frac{1}{s}-1\right)},
\]
where $\widetilde{C}=\widetilde{C}(p,s,\rhob)$. Substituting into \eqref{eq:ranklapproximability} leads to the first claim.
The second claim follows directly by noticing that $\sigma_\ell\leq \min\limits_{\widehat{X} \colon \rank\{\widehat{X}\}=\ell} \|X-\widehat{X}\|_{\F}\leq \|X-\widetilde{X}\|_{\F} $.
\end{proof}

\begin{remark}
Theorem \ref{thm:lowrankapproximability} shows that the singular values $\{\sigma_\ell\}_{\ell}$ of $X$ decay faster than any polynomial in $\ell$, and justifies our search for a low-rank approximant. We note however that, through much more sophisticated analyses, it is possible to show that the best $\ell$-term approximation error decays with a rate proportional to $\exp(-\ell^{1/p})$; see \cite[Ch.~3]{adcock2022sparse}.
\end{remark}

\section{Geometry of the manifold of fixed-rank matrices and Riemannian optimization} \label{sec:geom_fixed_rank_manif}
In this section, we briefly recall the necessary background to formulate Riemannian optimization algorithms on the manifold of fixed-rank matrices $\cMr$. In particular, due to the choice of inner product \eqref{eq:scalar_product}, we rely on a nonstandard parametrization of the manifold, which in turn requires to derive novel adapted characterizations of a few geometric objects. A similar approach was recently proposed in \cite{BKR:2025}.

It is well-known that $\cMr$ is an embedded submanifold of $\Rmn$; see, e.g., \cite[Sec.~7.5]{boumal_2023}.
Elements in $\cMr$ are typically parametrized using the singular value decomposition (SVD)
\begin{equation}\label{eq:parametrizationmanifold}
    \begin{split}
       \cMr = \lbrace U\Sigma V\tr \colon & U \in \Stmr, \ V \in \Stnr, \\
              & \Sigma = \diag(\sigma_{1}, \sigma_{2}, \ldots, \sigma_{r} ) \in \R^{r \times r}, \ \sigma_{1} \geq \cdots \geq \sigma_{r} > 0 \rbrace,
    \end{split}
\end{equation}
where $ \Stmr $ is the Stiefel manifold of $ m \times r $ real matrices with orthonormal columns, and  $ \Sigma $ is a diagonal matrix with $ \sigma_{1}\geq \sigma_{2}\geq \ldots \geq \sigma_{r}>0 $ on its main diagonal. Note however that the representation of rank-$r$ matrices is not unique.

Given the choice of inner product \eqref{eq:scalar_product} in the ambient space $\Rmn$, we parametrize $\cMr$ via a {\em weighted SVD}~\protect{\cite{VanLoan:1976}}, namely,
\begin{equation}\label{eq:newparametrizationmanifold}
    \begin{split}
       \cMr = \lbrace \Ut\widetilde{\Sigma}\Vt\tr \colon & \Ut\in \R^{m\times r},\; \Vt\in \R^{n\times r},\;\text{s.t. }\Ut\tr\! K \Ut = \Vt\tr\! \Xi_0 \Vt = I_{r}, \\
              & \widetilde{\Sigma} = \diag(\sigmat_{1}, \sigmat_{2}, \ldots, \sigmat_{r} ) \in \R^{r \times r}, \ \sigmat_{1} \geq \cdots \geq \sigmat_{r} > 0 \rbrace,
    \end{split}
\end{equation}
so that any element of $\cMr$ is identified by a triple $(\Ut,\Sigmat,\Vt)$.
We emphasize that the factors $\Ut$ and $\Vt$ satisfy the nonstandard orthogonality conditions $\Ut\tr\! K \Ut = \Vt\tr\! \Xi_0 \Vt = I_{r}$. Hence, this parametrization requires a rederivation of the characterization of the tangent space to $\cMr$ at a point $X$ and of its associated projection.
By explicitly constructing a curve $ c \colon t\mapsto c(t)\in \cMr $ passing through $ X = \Ut\widetilde{\Sigma}\Vt\tr $ at $t=0$, one derives (adapting \cite[Sec.~7.5]{boumal_2023}) the modified tangent space representation 
\begin{equation*}
   \begin{split}
      \mathrm{T}_{X}\cMr = \left\lbrace \Ut\widetilde{M}\Vt\tr + \Utp\Vt\tr + \Ut\Vtp\tr  \right. \colon \widetilde{M} \in \R^{r\times r}, & \ \Utp\in\R^{m\times r}, \ \Vtp \in \R^{n\times r},  \\
      & \left. \Ut\tr\! K \Utp = 0, \ \Vt\tr\! \Xi_0 \Vtp = 0 \right\rbrace.
   \end{split}
\end{equation*}
The last two conditions are the so-called gauge conditions. As Riemannian metric on $\mathrm{T}_{X}\cMr$, we choose the restriction of the Euclidean metric on $\Rmn$ induced by the inner product \eqref{eq:scalar_product}, i.e.,
\begin{equation}\label{eq:riemannian_metric}
	g_{X}(\xi,\xi') = \langle \xi, \xi' \rangle_{\cP} = \trace(K\xi \Xi_0 (\xi')\tr), \quad \text{with} \ \xi, \xi' \in \TXMr.
\end{equation}
For any two tangent vectors $\xi$ and $\xi'$ admitting the representations $ (\Mt,\Utp,\Vtp) $ and $ (\widetilde{M}',\Utp',\Vtp')$, the inner product $\langle \xi,\xi'\rangle_{\cP}$ can be computed efficiently. Indeed, by taking into account the new gauge conditions and the modified orthogonality conditions, a direct calculation leads to 
\begin{align*}
  \langle\xi,\xi'\rangle_{\cP} &= \left\langle K(\Ut\widetilde{M}\Vt\tr + \Utp\Vt\tr + \Ut\Vtp\tr)\Xi_0, \ \Ut\widetilde{M}'\Vt\tr + \Utp'\Vt\tr + \Ut\Vtp'\,\!\tr\right\rangle \\
   &= \trace\!\big(\widetilde{M}\tr\!\widetilde{M}'+\Utp\tr\! K\Utp' + \Utp\tr\! \Xi_0\Utp'\big).
\end{align*}
The tangent space projection $ \PTXM^{\cP} $ maps an arbitrary ambient vector $Z \in \Rmn$ orthogonally, with respect to the $\langle\cdot,\cdot\rangle_{\cP}$ inner product, onto $ \mathrm{T}_{X}\cMr $. Its application is given in abstract terms by \cite[(7.53)]{boumal_2023}
\[
	\PTXM^{\cP}(Z) = P_{\Ut} Z P_{\Vt} + P_{\Ut}^\perp Z P_{\Vt} + P_{\Ut} Z P_{\Vt}^\perp,
\]
where $ P_{\Ut}=\Ut\Ut\tr\! K $ and $ P_{\Vt}=\Xi_0 \Vt \Vt\tr $ are the $K$- and $\Xi_0$- orthogonal projections on the subspaces spanned by the columns of $\Ut$ and rows of $\Vt$, respectively, and $P_{\Ut}^\perp=I-P_{\Ut}$ and $P_{\Vt}^\perp=I-P_{\Vt}$ are the orthogonal projections onto their complements. In terms of the parametrization of $\mathrm{T}_{X}\cMr$, the element $\PTXM^{\cP}(Z)$ is identified by the triple $ (\widetilde{M}, \Utp, \Vtp) $
where 
\[
   \widetilde{M} = \Ut\tr\! K Z\Xi_0 \Vt, \qquad \Utp = Z\Xi_0\Vt - \Ut\widetilde{M}, \qquad \Vtp = Z\tr\! K\Ut - \Vt\widetilde{M}\tr.
\]

The Riemannian gradient of a function $ f \colon \cMr\rightarrow \R $ is \cite[Sec.~3.8]{boumal_2023} 
\[
	\grad f(X)\coloneqq \PTXM^{\cP}\!\left(\nabla \bar{f}(X)\right),
\]
where $ \bar{f} \colon \cO\supset \cMr\rightarrow \R $, satisfying $ \bar{f}|_{\cMr} = f $, is an extension of $f$ defined over an open set $ \cO\subset \Rmn $ containing $\cMr$, $ \nabla\bar{f} $ denotes the Euclidean gradient, and $ \PTXM^\cP $ is the $ \cP $-orthogonal projection onto $\mathrm{T}_{X}\cMr $.
In our setting, the functional $\cF$ defined in \eqref{eq:discr_fun_matrix_form} is naturally defined over the whole $\Rmn$, and its Euclidean gradient is \eqref{eq:gradient}.

Finally, the action of the Riemannian Hessian of $ f \colon \cMr \rightarrow \R $ on a tangent vector $H$ is defined (see \protect{\cite[(7.56)]{boumal_2023}}) as 
\begin{equation}\label{eq:RiemannianHessian}
      \Hessian f(X)[H]  = \PTXM^{\cP}\! \left(\D\!\bar{G}(X)[H]\right),
\end{equation}
where $ \bar{G}(X) $ is a smooth extension of $ \grad f(X) $.
After some lengthy calculations (detailed in~\ref{SM:RiemannianHessian}), one obtains the representation of $\Hessian f(X)[H]$ in the tangent space parametrization
\begin{equation}\label{eq:RiemannianHessianFactors}
	\Hessian f(X)[H] = \Ut\Mh\Vt\tr + \Uhp\Vt\tr + \Ut\Vhp\tr,
\end{equation}
with 
\begin{align}
   \Mh &= \Ut\tr\! K\dt{Z}\Xi_0\Vt, \qquad  \Uhp =  P_{\Ut}^{\perp} ( \dt{Z} \Xi_0\Vt + Z\Xi_0\Vtp\Sigmat^{-1}),\label{eq:hatM}\\
   \Vhp &= (P_{\Vt}^{\perp})\tr ( \dt{Z}\tr\! K\Ut + Z\tr\! K\Utp \Sigmat^{-1} ),\label{eq:hatVp}
\end{align}
where $ Z = \grad \bar{f}(X) $, $ \dt{Z} = \Hessian \bar{f}(X)[H]$, $ X = \Ut\Sigmat\Vt\tr $, and $H$ is represented by the triple $ (\Mt, \Utp, \Vtp) $. 
We emphasize that with respect to the formulas in~\protect{\cite[(7.57)]{boumal_2023}}, here we have the additional matrices $\Xi_0$ and $K$ due to our choice of (preconditioned) Riemannian metric \eqref{eq:riemannian_metric}.

\subsection{Riemannian optimization algorithms}

In the numerical experiments of Sec.~\ref{sec:experiments}, we compute a minimizer of \eqref{eq:optproblem_lowrank} using the Riemannian conjugate gradient (RCG) and the Riemannian trust-region (RTR) methods.

The Riemannian extension of nonlinear conjugate gradient methods originates in \cite{smith1993geometric,smith1994optimization,Edelman:1998,RingWirth:2012}, see \cite{SatoIwai:2015,Sato:2016,Sato:2021,Sato:2022} for a modern treatment.
Beyond standard geometric components (such as projections and retractions) used in Riemannian gradient descent, RCG additionally employs a vector transport to combine search directions across tangent spaces.
RCG generates a sequence of iterates $\{x_i\}_{i\in\mathbb{N}}$ through the iteration
\[
	x_{i+1} = \Retraction_{x_{i}}(\eta_{i}),
\]
where $\eta_i$ is a descent direction and $ \Retraction_{x} \colon \mathrm{T}_{x} \cM \to \cM $ is a retraction, i.e., a smooth approximation of the exponential map satisfying standard first-order consistency conditions (see, e.g., \cite[Def.~4.1.1]{AMS:2008}). The descent direction is typically defined as $ \eta_{i} = \alpha_{i} \, \xi_{i}$,
where the step size $ \alpha_{i} \in\R^+$ is computed using an Armijo backtracking line search applied to the pullback function 
\[
	\widehat{f}_{x_{i}}(\xi) = f(\Retraction_{x_{i}}(\xi)).
\]
and, for $i\geq 1$, 
\[
   \xi_{i} = -\grad f(x_{i}) 
   + \beta_{i}^{\mathrm{FR}} \, \cT_{x_{i-1}\to x_{i}}(\xi_{i-1}),
\]
where $\cT_{x_{i-1}\to x_{i}}\colon \T_{x_{i-1}}\cM \to \T_{x_{i}}\cM$ denotes parallel transport. 
The Fletcher--Reeves parameter is
\begin{equation}\label{eq:fletcher_reeves}
   \beta_{i}^{\mathrm{FR}} 
   = \frac{\|\grad f(x_{i+1})\|^{2}}{\|\grad f(x_{i})\|^{2}}.
\end{equation}
At the first iteration, $\xi_0$ is set equal to the negative Riemannian gradient of $f$ at $x_i$.
A pseudocode description appears in Algorithm~\ref{algo:RCG}. In practice, $\cT_{x_{i-1}\to x_{i}}$ is approximated by the orthogonal projection onto the $\T_{x_{i}}\cM$.

The choice of $\beta_i$ is critical. We adopt the Fletcher--Reeves update~\eqref{eq:fletcher_reeves}~\cite{Fletcher:1964}; alternatives include the Polak--Ribi\`ere~\cite{Polak:1969} and Hestenes--Stiefel~\cite{Hestenes:1952} formulas. 
Convergence results for the Fletcher--Reeves variant are given in~\cite[\S 6.1.1]{Sato:2022} and~\cite[\S 4.4]{Sato:2021}.

\begin{algorithm}
	\caption{The RCG method.} \label{algo:RCG}
	\begin{algorithmic}
		\STATE \textbf{Input}: Riemannian manifold $\cM$, objective function $f$, initial iterate $x_{0}\in \cM$, projector $ \P_{x} $ onto $\T_{x}\cM$, retraction $\Retraction_{x}$ from $\T_{x}\cM$ to $\cM$.
		\STATE \textbf{Output}: Sequence of iterates $\lbrace x_{i} \rbrace$.
		\STATE Set $i\leftarrow 0$ and compute the Riemannian gradient as $ \grad f(x_{i}) = \P_{x_{i}} \! \big( \nabla f(x_{i}) \big)$.
		\STATE Set $ \xi_{i} =  -\grad f(x_{i}) $.
		\WHILE{$x_{i}$ does not sufficiently minimize $f$}
			\STATE Compute a step size $\alpha_{i} > 0$ with a line-search procedure.
			\STATE Set $x_{i+1} = \Retraction_{x_{i}}(\alpha_{i} \xi_{i})$.
			\STATE Compute the Riemannian gradient as $ \grad f(x_{i+1}) = \P_{x_{i+1}}\! \big( \nabla f(x_{i+1}) \big)$.
			\STATE Compute the parallel transport $ \cT_{x_{i}\to x_{i+1}} $ of $ \xi_{i} $.
			\STATE Compute the scalar $\beta_{i}^{\mathrm{FR}} = \frac{\|\grad f(x_{i+1})\|^{2}}{\|\grad f(x_{i})\|^{2}} $.
			\STATE Set the new direction $ \xi_{i+1} = -\grad f(x_{i+1}) + \beta_{i}^{\mathrm{FR}} \, \cT_{x_{i}\to x_{i+1}} \, \xi_{i} $.
			\STATE $ i \leftarrow i + 1 $.
		\ENDWHILE
	\end{algorithmic}
\end{algorithm}

In the numerical experiments, we also employ the Riemannian trust-region (RTR) method of Absil, Baker, and Gallivan~\protect{\cite{ABG:2007}}. For reference, we provide the pseudocode for RTR in Algorithm~\ref{algo:RTR}.
The main idea of trust-region methods is that, instead of minimizing the objective function, at the current iterate we construct a quadratic model of it and minimize it in a region where the model's accuracy is trusted.

Given an initial point $ x_{0} \in \cM $ and an initial trust-region radius $ \Delta_{1} \in (0, \bar{\Delta}) $, the method generates a sequence $ \{ x_{i} \}_{i\in\mathbb{N}} \subset \cM $.
At iteration $ i $, a quadratic model $m_i: \mathrm{T}_{x_{i}}\cM\rightarrow \R$ of $ f $ around $ x_{i} $ is constructed as
\begin{equation}\label{eq:quadratic_model}
	m_{i}(\xi) = f(x_{i}) + \langle \grad f(x_{i}), \xi \rangle + \tfrac{1}{2}\langle \Hessian f(x_{i})[\xi], \xi \rangle, \qquad \xi \in \mathrm{T}_{x_{i}}\cM.
\end{equation}
A trial step $ \eta_{i} $ is obtained by (approximately) solving the trust-region subproblem
\begin{equation}\label{eq:TRsubproblem}
	\min_{\|\xi\| \leq \Delta_{i}} m_{i}(\xi).
\end{equation}
This is typically done using the truncated conjugate gradient (tCG) of~\protect{\cite{Steihaug:1983,Toint:1981}}.
The step acceptance is based on the ratio of achieved to predicted reduction,
\[
	\rho_{i} = \frac{\widehat{f}_{i}(0) - \widehat{f}_{i}(\eta_{i})}{m_{i}(0) - m_{i}(\eta_{i})},
\]
where $ \widehat{f}_{i} $ denotes the pullback of $ f $ to $ \mathrm{T}_{x_{i}}\cM $.

If $ \rho_{i} \geq 0.05 $, the step is accepted and the iterate is updated via the retraction, $ x_{i+1} = \Retraction_{x_{i}}(\eta_{i})$,
otherwise, the step is rejected and $ x_{i+1} = x_{i} $.

The trust-region radius $ \Delta_{i} $ is updated according to standard rules. In particular, the radius is increased when the model is reliable ($ \rho_{i} \geq 0.75 $) and the step lies on the boundary of the trust region; it is reduced when the agreement is poor ($ \rho_{i} \leq 0.25 $); and it is left unchanged otherwise. Under standard assumptions, the RTR method enjoys global convergence guarantees and fast local convergence.

\begin{algorithm}
    \caption{The RTR method of~\protect{\cite{ABG:2007}}.}\label{algo:RTR}
    \begin{algorithmic}
      \STATE \textbf{Input}: $ \bar{\Delta} > 0, \, \Delta_{1} \in (0,\bar{\Delta}) $
        \FOR{$ i = 1,2, \dots $}
         \STATE Define the second-order model $ m_{i} $ as in \eqref{eq:quadratic_model}. 
         \STATE Compute $\eta_i$ by solving the TR subproblem \eqref{eq:TRsubproblem}. 
        \STATE Compute $ \rho_{i} = (\widehat{f}_{i}(0) - \widehat{f}_{i}(\eta_{i}))/(m_{i}(0) - m_{i}(\eta_{i})) $.
        
  \IF{$ \rho_{i} \geq 0.05 $}
 	\STATE   Accept step and set $ x_{i+1} = \Retraction_{x_{i}}(\eta_{i}) $.
  \ELSE
		\STATE Reject step and set $ x_{i+1} = x_{i} $.
  \ENDIF
 \STATE  Set
  \[
     \Delta_{i+1} =
     \begin{cases}
        \min(2\Delta_{i}, \bar{\Delta}) & \text{if} \ \rho_{i} \geq 0.75 \ \text{and} \ \| \eta_{i} \| = \Delta_{i}, \\
        0.25 \, \| \eta_{i} \| & \text{if} \ \rho_{i} \leq 0.25, \\
        \Delta_{i} & \text{otherwise}.
     \end{cases}     
  \]
  \ENDFOR
  \end{algorithmic}
\end{algorithm}

Algorithms \ref{algo:RCG} and \ref{algo:RTR} assume that the rank $r$ is fixed and known a priori. However, in practice the rank is rarely known in advance. For this reason, rank-adaptive optimization strategies have been developed; see, e.g., \cite{UschmajewVandereycken2014,GaoAbsil:2022,BKR:2025}. In this work, we adopt the rank-adaptive strategy described in \cite[Alg. 4.2]{BKR:2025} and recalled in Alg. \ref{algo:rank_adaptive}.
The idea is to solve \eqref{eq:optproblem_lowrank} for a given rank $r$ and to monitor the norm of the (Euclidean) gradient. If this norm exceeds a prescribed tolerance, the problem \eqref{eq:optproblem_lowrank} is solved again with an increased rank $ r_+ = r + r_{\mathrm{up}}$. This procedure is repeated until the gradient norm satisfies the desired stopping criterion.
To construct a warm start for the optimization problem on $\cM_{r_+}$, we add a normal correction to the current iterate $ X_{k} \in \cMr $ \cite{UschmajewVandereycken2014,GaoAbsil:2022}. More precisely, we set
\[
   X_{k+1} = X_{k} + \alpha_{\ast} Y_{\ast},
\]
with
\[
    Y_{\ast} = \P^{\cP}_{\cM_{r_{\mathrm{up}}}} \! \left( \Proj_{X_{k}}^{\perp,\cP}\!\big(-\nabla_{\cP} \cF(X_{k})\big) \right), \qquad \alpha_{\ast} = \frac{\| Y_{\ast} \|^{2}_{\cP}}{\sum_{i=0}^{p} \left\langle \barA_{i} Y_{\ast} \Xi_{i}, Y_{\ast} \right\rangle_{\F}}.
\]
Here, $ \P^{\cP}_{\cM_{r_{\mathrm{up}}}} $ is the $\cP$-metric projection onto the manifold $ \cM_{r_{\mathrm{up}}} $ (i.e., a rank-$r_{\mathrm{up}}$ truncated SVD), $ \Proj_{X_{k}}^{\perp,\cP} $ is the $\cP$-orthogonal projection onto the normal space $ \mathrm{N}_{X_{k}}\cMr $, and $ \nabla_{\cP}\cF(X_{k}) $ is the gradient with the $\cP$-metric, see~\eqref{eq:gradient}. 

The step size $ \alpha_{\ast} $ is chosen as the minimizer, along the direction $ Y_{\ast} $, of the quadratic and linear part of the functional $ \cF $ (excluding the nonlinear term involving $G$), namely,
\[
   \cF_{\text{quad}}(X) \coloneqq \frac{1}{2} \sum_{i=0}^{p} \trace \! \left( X\tr\! \barA_{i}X \Xi_{i} \right) - \trace(X\tr\! B \Xi_{0}),
\]
see \eqref{eq:discr_fun_matrix_form}. Due to the quadratic structure of $\cF_{\text{quad}}$, minimizing $ \cF_{\text{quad}}(X_k + \alpha Y_{\ast}) $ with respect to $\alpha$ yields the closed-form expression above.

\begin{algorithm}
    \caption{Rank-adaptive optimization strategy \cite[Alg. 4.2]{BKR:2025}. }\label{algo:rank_adaptive}
    \begin{algorithmic}
      \STATE \textbf{Input}: Initial rank $r$ and guess $ X_0 \in \cMr $, rank increment $ r_{\text{up}} $, tolerances $\text{tol}>0$ and $\varepsilon>0$.
	  Initialize $\text{res}=\|\nabla_{\cP}\cF(X_0)\|$, $k=0$.      
      \WHILE{$\text{res}>\text{tol}$}
      	\WHILE{fixed-rank optimization does not converge}
         \STATE Perform one iteration of Alg. \ref{algo:RCG} or \ref{algo:RTR} to get $X_{k+1}=U\Sigma V\tr \in \cM_r$. 
	  	 \IF{$ \sigma^2_r/\sum_{i=1}^r \sigma_i^2 < \varepsilon $}
	  	 	\STATE $r\leftarrow r_-\coloneqq\max\{k \colon \sum_{i=k+1}^r \sigma_i^2/\sum_{i=1}^r \sigma_i^2 \geq \varepsilon \} $.
	  	 	\STATE $ X_{k+1}\leftarrow U(:,1:r)\Sigma(1:r,1:r) V(:,1:r)\tr $.
	  	 \ENDIF
	  	 \STATE $k\leftarrow k+1$.
	  \ENDWHILE
	  \STATE $\text{res}=\|\nabla_{\mathcal{P}}\cF(X_k)\|$.
      \IF{$\text{res}>\text{tol}$}
      	 \STATE $X_k\leftarrow X_k+\alpha_{\ast} Y_{\ast} $.
      	 \STATE $r\leftarrow r_+\coloneqq r+r_{\text{up}}$.
      \ENDIF
  \ENDWHILE
  \end{algorithmic}
\end{algorithm}

\section{Low-rank computations and approximations of nonlinear functions}\label{sec:lowrankcomp}
Whenever $X$ is low rank, it is imperative for computational efficiency to evaluate the functional $\cF$, as well as its gradient and gradient's directional derivative, in low-rank format.
This can be achieved using techniques similar to those described in Appendices A and B of~\protect{\cite{Sutti2024}}. Let $X=\Phi\Psi\tr$, with $ \Phi \in \R^{m\times r} $ and $ \Psi \in \R^{n\times r}$, be a low-rank factorization of $X$, and $B=\Phi_B\Psi_B\tr$,  $ \Phi_B \in \R^{m\times r_B} $ and $ \Psi_B \in \R^{n\times r_B}$ be a low-rank factorization of $B$. Consider first the linear-quadratic problem (i.e., $G\equiv 0$). Then, \eqref{eq:discr_fun_matrix_form} simplifies to
\begin{align*}
   \cF_{\text{quad}}(X) &= \frac{1}{2} \sum_{i=0}^{p} \trace \! \left( \Psi\Phi\tr\! \bar A_{i}\Phi\Psi\tr\!\Xi_{i} \right) -  \trace(\Psi\Phi\tr\!  \Phi_B\Psi_B\tr \Xi_{0}).
\end{align*}
To avoid performing the matrix products $ \Phi\Psi\tr $ and $ \Phi_B\Psi_B\tr$, which cost $\cO(mns)$, with $ s \in \{ r, r_{B} \} $, we use the cyclic property of the trace operator to equivalently formulate $\cF_{\text{quad}}(X)$ as
\begin{equation*}
   \cF_{\text{quad}}(X) = \frac{1}{2} \trace \! \left(\sum_{i=0}^{p} \left(\Phi\tr\! \bar A_{i}\Phi\right)\left(\Psi\tr\!\Xi_{i}\Psi \right)\right)-\trace\!\left(\left(\Phi\tr\!\Phi_B\right)\left( \Psi_B\tr \Xi_0\Psi\right)\right),
\end{equation*}
where we use parentheses to highlight those matrix products that we want to be performed first, whose cost is either $\cO(ms^2)$ or $\cO(ns^2)$, with $s\in\{r,r_B\}$ (assuming that $\bar{A}_i$ is sparse for every $i=0,\dots,p$). The complexity of evaluating the functional is thus $\mathcal{O}((p+1)(m+n)r^2+ (m+n)rr_B)$.
Similarly, the gradient derived in \eqref{eq:gradient} admits the factorized expression
\begin{equation}\label{eq:gradient_factorized}
   \begin{split}
      \nabla \cF_{\text{quad}}(X) = & \ K^{-1} \left[ \bar A_{0}\Phi \quad\bar A_{1}\Phi \quad \cdots \quad \bar A_{p}\Phi \quad -\Phi_{B} \right] \\
      & \begin{bmatrix}
       I_{(p+1)r} &                     \\
               &  I_{r_{B}}       \\
      \end{bmatrix}
      \left(\Xi_0^{-1} \left[ \Xi_0\Psi \quad \Xi_{1}\Psi \quad \cdots \quad \Xi_{p}\Psi \quad \Xi_0\Psi_{B} \right]\right)\tr.
   \end{split}
\end{equation}
The left factor has size $m $-by-$ ((p+1)r+r_{B}) $, the core factor is a diagonal matrix of size $ (p+1)r+r_{B} $, and the right factor is of size $ n $-by-$ ((p+1)r+r_{B})$. Assuming that $K$ can be inverted with a linear computational cost in $m$ and since $\Xi_0$ is diagonal, the computation of the three factors costs $\mathcal{O}((p+1)(m+n)r+(m+n)r_B)$.
Finally, the directional derivative of the gradient (see \eqref{eq:directional_derivative_gradient}) along the direction $H$ (factorized in the ambient space as $ H = U_{\eta} \Sigma_{\eta} V_{\eta}\tr $ and of rank $ r_{H} \coloneqq \rank(H) \leq 2r $) has the factorized expression 
\begin{equation*}
   \begin{split}
      \D\!\left(\nabla \cF_{\text{quad}}(X)\right)[H] = & \ K^{-1} \left[ \bar A_{0}U_{\eta} \quad \bar A_{1}U_{\eta} \quad \cdots \quad \bar A_{p}U_{\eta}  \right] \\
      & \begin{bmatrix}
      I_{p+1} \otimes \Sigma_{\eta}
      \end{bmatrix}
      \left(\Xi_0^{-1}\left[ \Xi_0 V_{\eta} \quad \Xi_{1}V_{\eta} \quad \cdots \quad \Xi_{p}V_{\eta} \right]\right)\tr.
   \end{split}
\end{equation*}
The left factor has size $ m \times (p+1) r_{H} $, the core has size $ (p+1) r_{H} \times (p+1) r_{H} $, while the right factor has size $ n \times (p+1) r_{H} $. The cost of the three factors is asymptotically
$\mathcal{O}((p+1)(m+n)r_H)$.

The presence of a nonlinearity makes nontrivial to still perform computations in low-rank format, since it breaks the factorized representation of the input variable $X$. In general, ad-hoc solutions must be tailored to the specific form of the nonlinearity.
In the remainder of this section, we focus on the polynomial nonlinearity $G(X)=\frac{1}{4}W X^{\odot 4}$, $W$ being a diagonal matrix, which will be considered in the numerical experiments of Section \ref{sec:numexp_nonlinear}. We leverage the transposed variant of the Khatri--Rao product (see definition in~\protect{\cite[\S 7]{KressnerTobler:2014}}), denoted by $ \kt $, to compute an {\em exact} factorization of $G(X)$ and of its derivatives. Indeed, let $X$ admit the factorization (not necessarily an SVD) $X=USV\tr\in\Rmn$. Then, $X^{\odot 2}$ is equal to~\protect{\cite[\S 7]{KressnerTobler:2014}}
\begin{equation}\label{eq:fact_Xcirc2}
   X^{\odot 2} = (U \kt U) (S \otimes S) (V \kt V)\tr,
\end{equation}
with $ \rank(X^{\odot 2}) \leq r^{2} $. By introducing the notation $ U_{\odot 2} \coloneqq (U \kt U) $, $ S_{\odot 2} \coloneqq (S \otimes S) $, and $ V_{\odot 2} \coloneqq (V \kt V) $ for the factors of $ X^{\odot 2}$, and by re-applying \eqref{eq:fact_Xcirc2} since $X^{\odot 4} = (X^{\odot 2})^{\odot 2} $, we obtain the exact factorization of $X^{\odot 4} = U_{\odot 4} S_{\odot 4} V_{\odot 4}\tr$ with
\begin{equation*}\label{eq:fact_Xcirc4}
\thickmuskip=0.5mu
\nulldelimiterspace=0.9pt
\scriptspace=0.9pt 
\arraycolsep0.9em 
	U_{\odot 4} \coloneqq (U_{\odot 2} \kt U_{\odot 2}) \in \R^{m \times r^{4}},\;\; S_{\odot 4} \coloneqq (S_{\odot 2} \otimes S_{\odot 2}) \in \R^{r^{4}\times r^4},\;\;   V_{\odot 4} \coloneqq (V_{\odot 2} \kt V_{\odot 2}) \in \R^{n \times r^{4}}. 
\end{equation*}  
Letting $\Phi_{\odot 4} \coloneqq U_{\odot 4}S_{\odot 4}$, $\Psi_{\odot 4} \coloneqq V_{\odot 4}$, and using the definition of $L$ and the trace properties, we can manipulate the nonlinear term in \eqref{eq:discr_fun_matrix_form} obtaining the simplified expression
\begin{equation}\label{eq:functionalnonlinear}
	\trace (G(X)\tr L) = \frac{1}{4}\trace (W \Phi_{\odot 4} \Psi_{\odot 4}\tr \Xi_0 \unob_n \unob_m\tr)= \frac{1}{4}(\wb \tr \Phi_{\odot 4})(\Psi_{\odot 4}\tr{\bm m}), 
\end{equation}
where $\wb=(w_1,\dots,w_m)\in \R^m$ is the vector containing the mass-lumped quadrature weights, and ${\bm m}\coloneqq\diag(\Xi_0)\in \R^n$. Note that the last operation in~\eqref{eq:functionalnonlinear} is a scalar product between two vectors of length $r^4$.
Substituting \eqref{eq:functionalnonlinear} into \eqref{eq:discr_fun_matrix_form} yields the nonlinear functional expressed in low-rank format
\begin{equation*}
      \cF(X) = \trace \! \left( \frac{1}{2} \sum_{i=0}^{p} \left(\Phi\tr\! \bar A_{i}\Phi\right)\left(\Psi\tr\!\Xi_{i}\Psi \right)-\left(\Phi\tr \Phi_B\right)\left( \Psi_B\tr \Xi_0\Psi\right)\right)  + \frac{1}{4}(\wb \tr \Phi_{\odot 4}) (\Psi_{\odot 4}\tr{\bm m}),
\end{equation*}
which compared to the linear-quadratic case leads to an additional cost of order $\mathcal{O}((m+n)r^4)$, also including the cost of the transposed variant of the Khatri--Rao product.
Similarly, $ X^{\odot 3} $ admits the exact factorization, 
\[
   X^{\odot 3} = \left( U \kt U_{\odot 2} \right) \left( S \otimes S_{\odot 2} \right) \left( V \kt V_{\odot 2} \right)\tr,
\]
and, denoting the three terms by $ U_{\odot 3} $, $ S_{\odot 3} $, and $ V_{\odot 3} $ ,
the factorized expression of the Euclidean gradient is
\begin{equation*}
   \begin{split}
      \nabla \cF(X) = & \ K^{-1}\left[ \bar A_{0}\Phi \quad \bar A_{1}\Phi \quad \cdots \quad \bar A_{p}\Phi \quad -\Phi_{B} \quad W \, U_{\odot 3} \right] \\
      & \begin{bmatrix}
       I_{(p+1)r} &             &      \\
               &  I_{r_{B}}  &      \\
               &             &  S_{\odot 3}
      \end{bmatrix}
     \left(\Xi_0^{-1} \left[ \Psi \quad \Xi_{1}\Psi \quad \cdots \quad \Xi_{p}\Psi \quad \Xi_0\Psi_{B} \quad  \Xi_0 V_{\odot 3} \right]\right)\tr .
   \end{split}
\end{equation*}
Compared to \eqref{eq:gradient_factorized}, the rank of the Euclidean gradient increases by a factor $r^3$ and has an additional cost of order $\mathcal{O}((m+n)r^3)$.
Concerning the directional derivative of the Euclidean gradient, the nonlinear term leads to the additional contribution
\[
   3\left( W X^{\odot 2} \odot H \right) \Xi_{0}=3W\left( X^{\odot 2} \odot H \right) \Xi_{0},
\] 
where $ H=U_\eta S_\eta V_\eta\tr \in \mathrm{T}_{X}\cMr $. The exact factorization of $ X^{\odot 2} \odot H $ can be once more derived via the transposed variant of the Khatri--Rao product, namely
\[
   X^{\odot 2} \odot H = U_{\odot} S_{\odot} V_{\odot}\tr,
\]
where $ U_{\odot} \coloneqq \left( U_{\odot 2} \kt U_{\eta} \right) $, $ S_{\odot} \coloneqq \left( S_{\odot 2} \otimes S_{\eta} \right) $, and $ V_{\odot} \coloneqq \left( V_{\odot 2} \kt V_{\eta} \right) $.
The full expression of the factorized directional derivative of the gradient is
\begin{equation*}
   \begin{split}
      \D\!\nabla \cF(X)[H] = & \ K^{-1} \left[ \bar A_{0}U_{\eta} \quad \bar A_{1}U_{\eta} \quad \cdots \quad \bar A_{p}U_{\eta} \quad W \, U_{\odot} \right] \\
      & \begin{bmatrix}
      I_{(p+1)} \otimes \Sigma_{\eta}   &    \\
                                      &   3S_{\odot}
      \end{bmatrix}
      \left(\Xi_0^{-1}\left[ \Xi_0V_{\eta} \quad \Xi_{1}V_{\eta} \quad \cdots \quad \Xi_{p}V_{\eta} \quad  \Xi_0V_{\odot} \right]\right)\tr,
   \end{split}
\end{equation*}
whose rank grows at most by a factor $2r^3$ compared to the linear-quadratic case and leads to the additional cost $\mathcal{O}((m+n)r^3)$.

\subsection{Alternative handling of nonlinearities}\label{sec:nonlinearcomputations}
While the transposed Khatri--Rao product yields an exact factorization of the quartic nonlinearity, it typically leads to a substantial rank increase which may become prohibitive in certain large-scale simulations.
This motivates the development of approximated strategies based either on rank compression techniques or low-rank matrix factorizations.

At first glance, a promising approach for computing a low-rank approximation of $f(X)$, with $f$ a generic nonlinearity applied componentwise, would be to exploit CUR factorizations (see, e.g., \protect{\cite{Drineas:2008,Mahoney:2009}}). The central idea of the CUR decomposition is to derive a factorization of a given matrix $X$ as $X=CUR$, where $ C \coloneqq X(:,\cJ)$, $ R \coloneqq X(\cI,:) $, and $\cI$, $\cJ$ are suitably selected, distinct, row and column indices of $X$. These index sets can be chosen, e.g., using leverage scores as in~\cite{Mahoney:2009}, or (variants of) the DEIM algorithm \cite{chaturantabut2010nonlinear,gidisu2021hybrid}.
To approximate $f(X)$, one may then exploit the componentwise relation $(f(X))_{ij}=f(X_{ij})$, and set $ f(X) \approx f(C)\widehat{U}f(R) \eqqcolon f^\CUR(X)$, where $\widehat{U}\coloneqq f(X(\cI,\cJ))^{-1} \in\R^{k\times k}$, $k=|\cI|=|\cJ|$. Note that $\widehat{U}$ ensures that $f^\CUR(X)(\cI,\cJ)=f(X)(\cI,\cJ)$, that is, the resulting factorization interpolates exactly $f(X)$ for every $(i,j)\in \cI\times \cJ$. Despite its appeal, this approach presents a few limitations in the optimization context. First, $f^\CUR(X)$ does not guarantee that the resulting functional $\cF$ remains bounded from below. Indeed, although $f^\CUR(X)$ approximates $X^{\odot 4}$, this approximation does not preserve nonnegativity, and $f^\CUR(X)$ may contain negative entries. Second, even if positivity is enforced (e.g., by first using a CUR to construct a rank-$\rt$ approximation of $X^{\odot 2}$, and subsequently forming a Khatri--Rao product with itself to approximate $X^{\odot 4}$), the resulting functional is very ill-conditioned and Riemannian optimization algorithms exhibit stagnation at quite high values of the Riemannian gradient norm. 

Therefore, for the sake of this work, we focus on a rank compression technique. For any $X\in\R^{m\times n}$ with weighted SVD format $X=\sum_{i=1}^r \sigma_{i}u_{i}v_i\tr$, $u_j\tr K u_i=\delta_{ji}$,  $v_j\tr \Xi_0 v_i=\delta_{ji}$, let $\cT_{\rt} (X) \colon \R^{m\times n}\rightarrow \cM_{\rt}$ be the best rank $\rt$-approximation of $X$ with respect to the $\langle\cdot,\cdot\rangle_{\mathcal{P}}$ inner product, that is,
\[\cT_{\rt} (X)=\sum_{i=1}^{\rt}\sigma_{i}u_iv_i\tr.\]
Our rank-compression strategy consists in approximating $G(X)=\frac{1}{4}W X^{\odot 4}$ with $G_{\rt}(X)=\frac{1}{4}W  \left(\cT_{\rt}(X) \right)^{\odot 4}$, and in considering the approximated functional 
\begin{equation}\label{eq:Frtilde}
	\begin{aligned}
		\cF^{\rt} (X) \coloneqq & \, \trace \! \left( \frac{1}{2} \sum_{i=0}^{p} \left(\Phi\tr\! \bar A_{i}\Phi\right)\left(\Psi\tr\!\Xi_{i}\Psi \right)-\left(\Phi\tr \Phi_B\right)\left( \Psi_B\tr \Xi_0\Psi\right)\right)\\
		& \ + \frac{1}{4}\underbrace{\trace\! \left( \left(W\left(\cT_{\rt}(X)\right)^{\odot 4}\right) \tr L\right)}_{\eqqcolon T(X)}.
	\end{aligned}
\end{equation}
Note that, in our context, evaluating $\cT_{\rt}$ is trivial since $X$ is always expressed in its weighted SVD format. Hence, the evaluation of the nonlinear term is dominated by the successive Khatri--Rao products, resulting in an overall cost $\mathcal{O}((m+n)\rt^4)$, with $\rt$ possibly much smaller than $r$. For optimization purposes, we need to derive the directional derivative of $T(X)$, which can be achieved via chain rule. A direct calculation leads to
\begin{equation}
	\begin{aligned}
		\D\! T(X)[\delta X] &=\langle W \D (\cT_{\rt}(X))^{\odot 4} [\delta X],L\rangle \\
		&= \langle \D(\cT_{\rt}(X))^{\odot 4} [\delta X], WL\rangle \\
		&= \langle \cT_{\rt}(X)^{\odot 3} \odot \D (\cT_{\rt}(X)) [\delta X], WL\rangle\\
		&= \langle \D (\cT_{\rt}(X)) [\delta X], \cT_{\rt}(X)^{\odot 3} \odot WL\rangle\\
		&=\langle \delta X, \left(\D (\cT_{\rt}(X))\right)^\ast[\cT_{\rt}(X)^{\odot 3} \odot WL]\rangle.
	\end{aligned}
\end{equation}
Detailed calculations for the adjoint (w.r.t. to the standard Frobenius inner product) of the directional derivative of the weighted rank truncation are reported in \ref{SM:derivative_svd_truncation}. Given a $X\in \cM_{r}$ factorized as 
\[X=[U_Z, U_{Z_{\perp}}]\begin{pmatrix}\Sigma_{\rt}\\ & \Sigma_{r-\rt}\end{pmatrix}[V_Z,V_{Z_{\perp}}]\tr,\] 
the final expression for the adjoint is
\begin{equation}\label{eq:directional_derivative}
	\begin{aligned}
		\left( \D (\cT_{\rt}(X))\right)^\ast[\varOmega] = & \ P_{U_Z}^\top \varOmega + \varOmega P_{V_Z} - P_{U_Z}^\top \varOmega P_{V_Z} \\
		& \ + K U_Z \left[ \Psi\tr \odot \varOmega_{21}\tr + (\Xi\tr\! -I )\odot \varOmega_{12}\right]V_{Z_\perp}\tr \Xi_0 \\
		& \ + K U_{Z_{\perp}} \left[ \Psi\odot \varOmega_{12}\tr + (\Xi-I)\odot \varOmega_{21}\right] V_Z\tr \Xi_0,
	\end{aligned}
\end{equation}
where 
$\varOmega_{12}\coloneqq U_Z\tr \varOmega V_{Z_{\perp}} \in \R^{\rt\times (r-\rt)}$, $\varOmega_{21} \coloneqq U_{Z_{\perp}}\tr \varOmega V_Z \in \R^{(r-\rt)\times \rt}$, and
\begin{equation*}
	\Psi\in \R^{(r-\rt)\times \rt},\quad (\Psi)_{ij} \coloneqq \frac{\sigma_i\sigma_j}{\sigma_i^2-\sigma_j^2}\quad\text{and} \quad \Xi\in \R^{(r-\rt)\times \rt},\quad  (\Xi)_{ij} \coloneqq \frac{\sigma_i^2}{\sigma_i^2-\sigma_j^2}.
\end{equation*}
From \eqref{eq:directional_derivative}, the gradient of $T$ is directly obtained multiplying by the inverse of $K$ and of $\Xi_0$, from the left and from the right, respectively; see \eqref{eq:gradient}.

\section{Numerical experiments} \label{sec:experiments}

We present numerical experiments aimed at illustrating the proposed framework with applications arising from parametric/random PDEs. We begin with a linear case to establish a baseline. Subsequently, we consider a nonlinear problem.
All experiments are performed on a workstation with the software MATLAB R2023b and rely on the Manopt toolbox \cite{boumal2014manopt} for Riemannian optimization algorithms. The data structure and routines to store and manipulate elements in $\cM_r$ based on the modified 
representation \eqref{eq:newparametrizationmanifold} were implemented in-house.

\subsection{Linear problem}\label{sec:linear}
We consider the domain $ \cD \coloneqq (0,1)^{2} $, and the weak formulation of the linear parametrized PDE: find $y(\cdot,\xib)\in H^1_0(\cD)$ such that
\begin{equation}\label{eq:linear_random_PDE}
	\int_{\cD} \kappa(x,\xib)  \nabla y(x,\xib) \cdot \nabla v(x) \dx = \int_{\cD} f(x,\xib) v(x) \dx,\quad\forall v\in H^1_0(\cD),\;
\end{equation}
with $\xib\in \Gamma\coloneqq(-1,1)^p$.
The diffusion coefficient $\kappa(x,\xib)$ admits the affine expansion
\begin{equation}\label{eq:diffusion_coefficient}
\kappa(x,\xib)=1+\sum_{j=1}^p \xi_j\psi_j(x),\quad \text{with} \quad\psi_j(x)=(k_j^2+\ell_j^2)^{-1}\sin(\pi k_j x_1)\sin(\pi\ell_j x_2),
\end{equation}
where $(k_j,\ell_j)_{j\geq 1}$ is an ordering sequence of elements in $\mathbb{N}\times\mathbb{N}$ such that $\{k_j+\ell_j\}_j$ is nondecreasing, while the parameter-dependent force term is given by
\begin{equation}
f(x,\xib)\coloneqq 100\,\exp(-((x_1-\xib_1)^2+(x_2-\xib_2)^2)/2)\cos(2\pi x)\sin(2\pi y).
\end{equation}
After randomly selecting $\{\xib_{i}\}_{i=1}^n$ parameter values and discretizing in space using a $\mathbb{P}^1$ finite element space $V_h\coloneqq\text{span}\left\{\varphi_1,\dots,\varphi_m\right\}$ of dimension $m$, we recover the linear version (i.e., $g\equiv 0)$ of \eqref{eq:nonlinear_model}, where $A(\xib)$ is the stiffness matrix, $b(\xib)$ is a discretization of the force term, and the vector $\xb(\xib)$ collects the degrees of freedom of $y(\cdot,\xib)$ in the finite element basis. As preconditioning matrix $K$, we choose the stiffness matrix associated to the standard $H^1_0(\cD)$ scalar product, while we set the weights $m_i=1/n$, for $i=1,\dots,n$. All solves with $K$ are performed using a precomputed Cholesky factorization.

We start by investigating the effect of the modified inner product \eqref{eq:scalar_product} on the convergence behavior of RCG and RTR as both the spatial resolution and the number of samples increase. Table \ref{tab:iterations} reports the number of iterations obtained using the standard Frobenius inner product and the modified one, denoted by the suffix -p. All methods start from a random initial point drawn on the manifold.
Without preconditioning, the convergence behavior of RCG strongly depends on the refinement level. In contrast, RCG-p exhibits remarkable robustness under refinement, with iteration counts remaining stable as $m$ and $n$ grows, highlighting the crucial role played by the choice of the inner product. This observation is in agreement with the results presented in \cite{BKR:2025} for the minimization of a quadratic functional arising in the solution of matrix equations. For the RTR algorithm, we report in parentheses the number of inner tCG iterations. We observe that both preconditioned and unpreconditioned variants require a number of outer iterations independent of the refinement level, while the inner computational cost per iteration of the unpreconditioned version deteriorates drastically as $m$ and $n$ increase.

\begin{table}[h]
\centering
\begin{tabular}{c c c c c c}
\hline
$m$ & $n$ & It.\ RCG & It.\ RCG-p & It.\ RTR & It.\ RTR-p \\
\hline
225 & 256 & 132  & 47 & 27 (582)  & 31 (147) \\
961 & 512 & 321 & 34 & 26 (904) & 29 (107) \\
3969 & 1024 & 759 & 42 & 27 (2377) & 34 (131) \\
16129 & 2048 & $>$1000 & 48 & 33 (4229) & 36 (160) \\
\hline
\end{tabular}\caption{Number of iterations of RCG and RTR with (-p) and without the modified inner product, for increasing values of $m$ and $n$. Parameters $p=4$ and $r=16$.}\label{tab:iterations}
\end{table}

Next, Table \ref{tab:RCG_RTR} compares the preconditioned RCG and preconditioned RTR in terms of computational times and approximation accuracy as the rank $r$ increases. We use the shorthand notation
\[
   e^{s} \coloneqq \frac{\|X-X^s\|_{\mathcal{P}}}{\|X\|_\mathcal{P}},
\]
$X$ being the full snapshot matrix, $s\in \left\{\star,\text{RCG},\text{RTR}\right\}$, to denote, respectively, the relative best approximation error of the best rank-$r$ truncation, and the errors achieved at convergence by the RCG and RTR algorithms in the norm $\|\cdot\|_{\mathcal{P}}$ induced by the scalar product \eqref{eq:scalar_product}. Since Table \ref{tab:iterations} shows that RTR may need quite a few iterations to enter into a quadratic basin of attraction, we performed a warm-up step consisting of RCG iterations until the norm of the Riemannian gradient was smaller than 1.0e-3, before starting the RTR algorithm. We report the number of iterations of RTR only, while the computational times include the preliminary warm-up step.
The results show that RCG outperforms RTR in terms of computational time, despite requiring more iterations. At the same time, both methods achieve the same approximation errors, which are comparable to those of the best approximation. As a reference for comparison, assembling the full snapshot matrix on the same machine (by solving the linear systems with a $K$-preconditioned gradient descent method with optimal step size) took 646.8 seconds.

\begin{table}[ht]
\centering
\begin{tabular}{r r r r r r r r}
\hline
Rank & RCG-It & RCG-s & $e^{\text{RCG}}$ & RTR-It & RTR-s & $e^{\text{RTR}}$  & $e^{\star}$\\
\hline
8 & 34 & 14.2 & 5.0e-03 & 3 & 20.3 & 5.0e-03 & 4.9e-03\\
16 & 33 & 29.0 & 5.9e-04 & 4 & 44.5 & 5.9e-04 & 5.8e-04\\ 
24 & 29 & 39.6 & 1.1e-04 & 4 & 89.8 & 1.1e-05 & 1.1e-05\\
32 & 36 & 62.5 & 3.5e-05 & 24 & 360.2 & 3.5e-05 & 3.5e-05\\
40 & 27 & 57.2 & 1.2e-05 & 24 & 441.5 & 1.2e-05 & 1.5e-05\\
\hline
\end{tabular}\caption{Comparison of RCG and RTR results for the linear problem: number of iterations, computational time (in seconds), and error in the preconditioned Frobenius norm between the resulting low-rank matrices and the full snapshot matrix $X$, for increasing ranks. Parameters: $m=16129$, $n=5000$, and $p=3$.}\label{tab:RCG_RTR}
\end{table}

Finally, Figure \ref{fig:rank_adaptivity} illustrates the convergence behavior of the rank-adaptive Algorithm \ref{algo:rank_adaptive} with both RCG (top row) and RTR (bottom row) as inner solvers, to reach a tolerance on the relative residual (i.e., the norm of the preconditioned Euclidean gradient) of order 1.0e-6.

\begin{figure}
\centering
\includegraphics[width=0.32\textwidth]{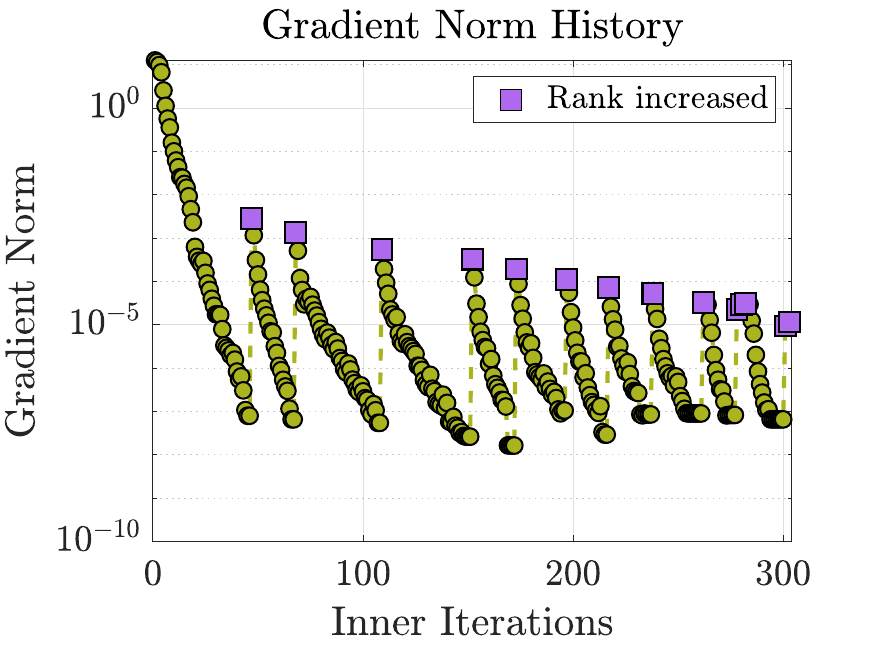}
\includegraphics[width=0.32\textwidth]{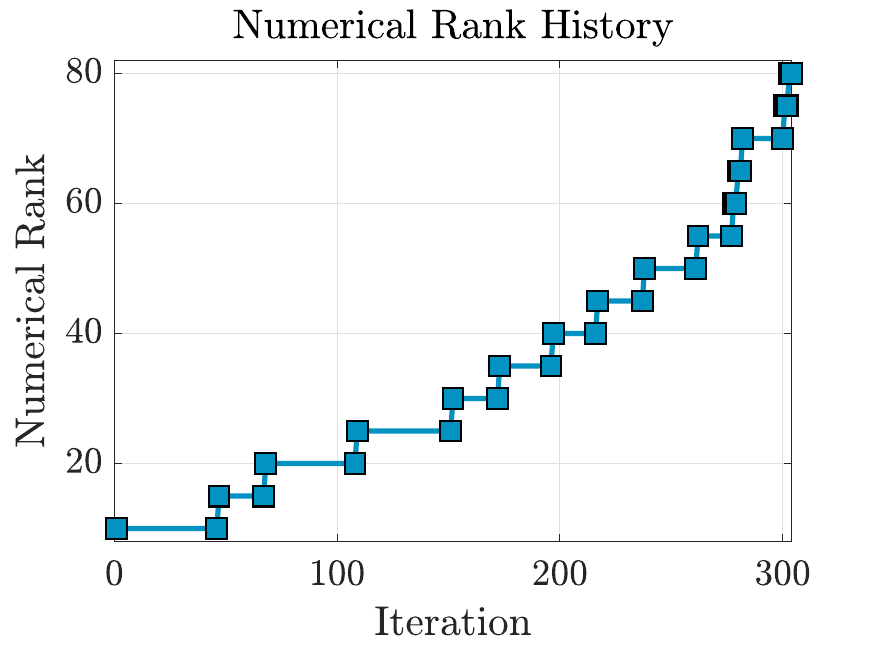}
\includegraphics[width=0.32\textwidth]{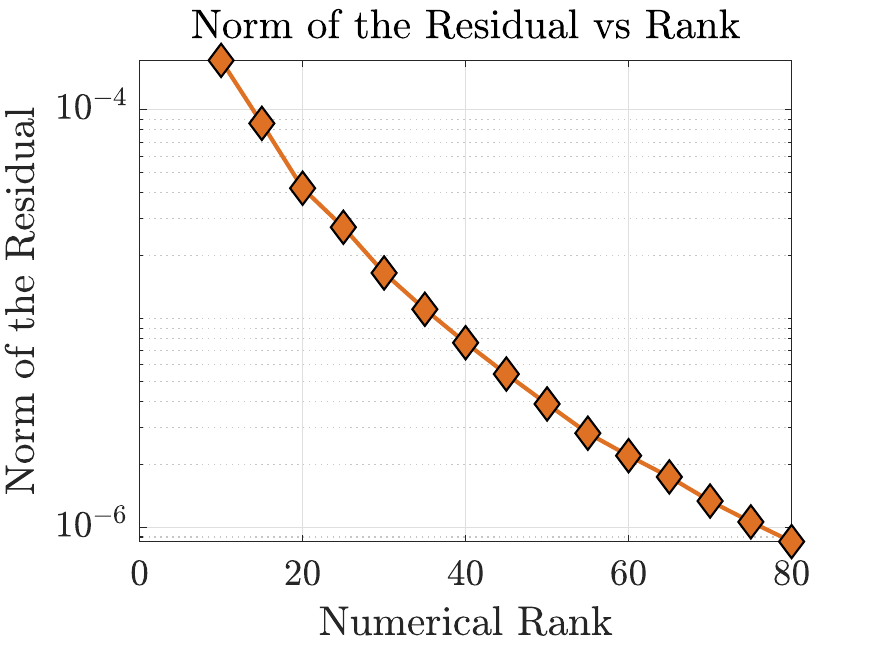}
\includegraphics[width=0.32\textwidth]{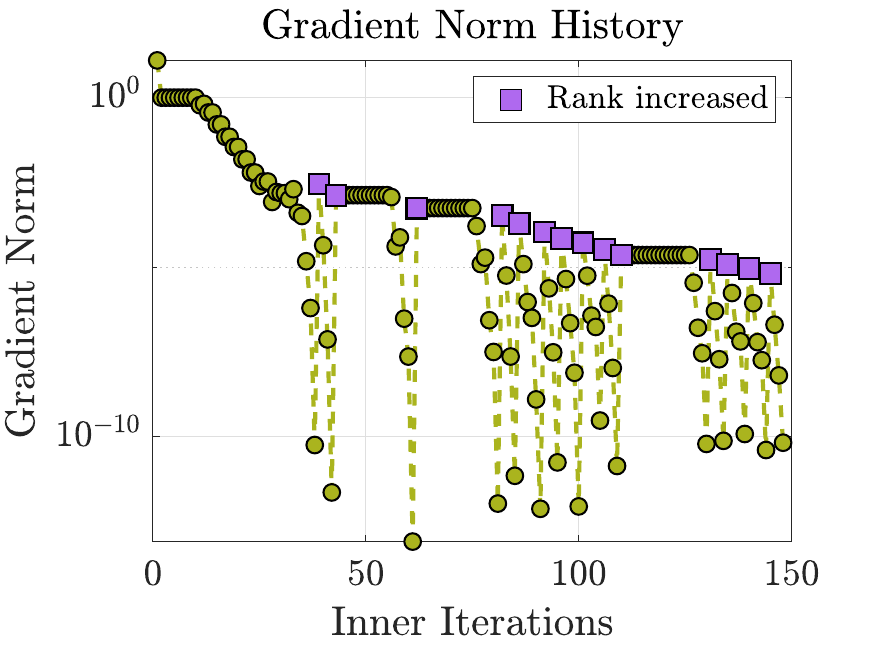}
\includegraphics[width=0.32\textwidth]{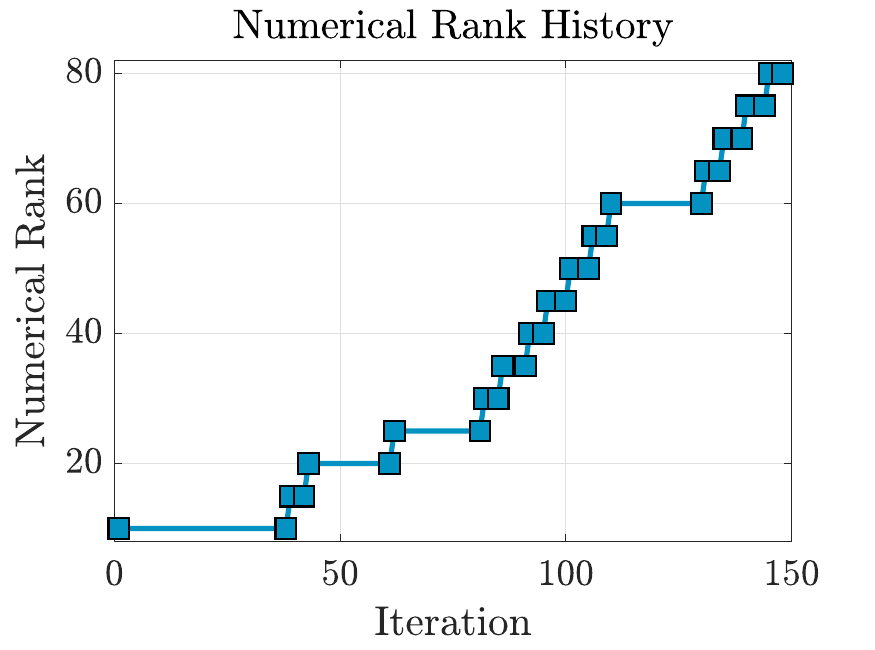}
\includegraphics[width=0.32\textwidth]{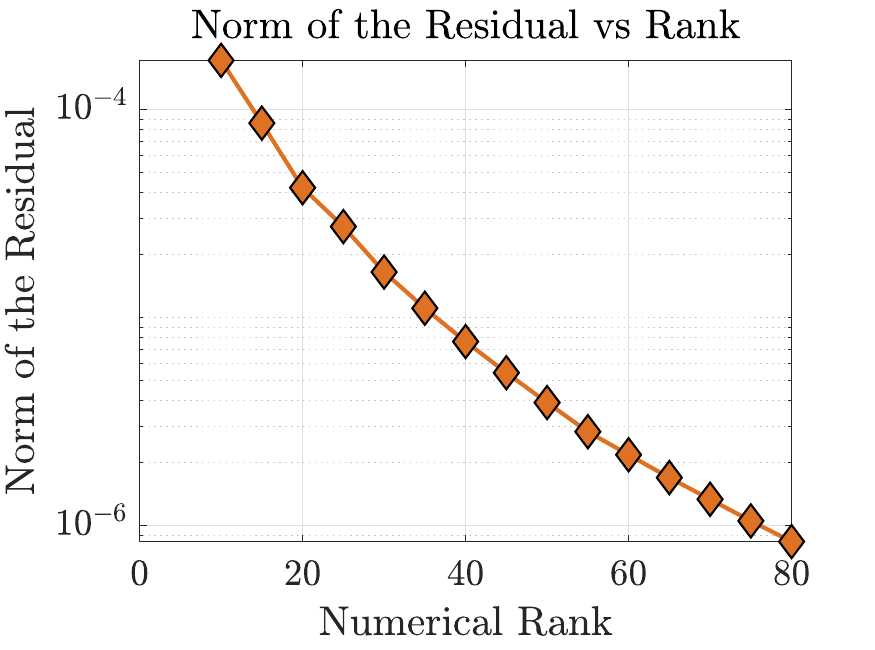}
\caption{Evolution of the norm of the Riemannian gradient, of the residual and of the matrix rank during the rank-adaptive algorithm with RCG (top row) and RTR (bottom row) as inner solvers. Parameters: $m=16129$, $n=5000$, $p=5$, initial rank $5$ and $r_{\mathrm{up}}=5$.}\label{fig:rank_adaptivity}
\end{figure}

\subsection{Nonlinear problem}\label{sec:numexp_nonlinear}

In this section, we consider the weak formulation of the nonlinear parametric PDE: find $y(\cdot,\xib)\in H^1_0(\cD)$ such that
\begin{equation}\label{eq:nonlinear_parametric_PDE}
\int_{\cD} \kappa(x,\xib)  \nabla y(x,\xib) \cdot \nabla v(x) + y^3(x,\xib) v(x) \dx = \int_{\cD} f(x,\xib) v(x) \dx,\quad  \forall v\in H^1_0(\cD),
\end{equation}
with $\xib\in \Gamma \coloneqq [-1,1]^p$, and the diffusion coefficient $\kappa$ is given by \eqref{eq:diffusion_coefficient}.
The discretization of the nonlinear term is performed using a finite element interpolation combined with mass lumping. Specifically, we first approximate the nonlinear term $y\mapsto y^3$ using the finite element interpolation operator $I_h \colon H^1_0(\cD)\rightarrow V_h$. The resulting integral is then approximated using a mass-lumping quadrature rule. For every test function $\varphi_{i}$, this yields
\[\int_{\cD} y^3(x) \varphi_{i}(x)\dx \approx \int_{\cD} I_h (y^3(x)) \varphi_{i}(x)\dx = \sum_{j=1}^m y_j^3 \int_{\cD} \varphi_j(x)\varphi_{i}(x)\dx \approx w_{i} y_{i}^3,\]
where $\left\{w_{i}\right\}_{i=1}^m$ are the positive mass-lumped weights. With this approximation, we recover the nonlinearity discussed in Section \ref{sec:lowrankcomp} and we fit into the abstract framework of Section \ref{sec:problem_statement} by defining $W=\text{diag}(w_1,\dots,w_m)\in\R^{m\times m}$, $g(\xb)=W\xb^{\odot 3}$ and $G(\xb)=\frac{1}{4}W \xb^{\odot 4}$ for every $\xb\in \R^m$, with the convention that both $g$ and $G$ acts columnwise on a matrix $X\in \Rmn$. 

Table \ref{tab:RCG_RTR_nonlinear} reports the number of iterations, computational times and approximation errors of RCG and RTR to reach a tolerance of 1.0e-9 on the Riemannian gradient. As a reference, the assembly of the full snapshot matrix using Newton's method costs 2183 seconds (with an averaged of three iterations to convergence starting from a zero initial guess).
As starting Newton's method with a zero initial guess corresponds to solve in the first iteration the linear problem ($G\equiv 0)$, we started both RCG and RTR with a warmed-up solution computed by solving the linear-quadric optimization problem up to a tolerance of $10^{-3}$ on the Riemannian gradient. This cost is included in the computational times. 
We observe that our computational framework is capable of delivering accurate approximations of the snapshot matrix up to rank $20$, together with a significant saving in computational time. Nevertheless, it is also evident that the $r^4$ scaling of the computational cost represents a severe limitation if higher accuracies are needed.
 
\begin{table}[ht]
\centering
\begin{tabular}{r r r r r r r r r }
\hline
Rank & RCG-It & RCG-s & $e^{\text{RCG}}$ & RTR-It & RTR-s & $e^{\text{RTR}}$  & $e^{\star}$\\
\hline
4 & 18 & 18.3 & 2.2e-02 & 3 & 29.6 & 2.2e-02 & 2.1e-02 \\
8 & 20 & 99.8 & 4.9e-03 & 4 & 336.1 & 4.9e-03 & 4.9e-03  \\ 
12 & 21 & 314.7 & 1.5e-03 & 4 & 1129.0 & 1.5e-03 & 1.5e-03 \\
16 & 21 & 787.2 & 5.9e-04 & 4 & 2268.8 & 5.9e-04 & 5.8e-04 \\
20 & 19 & 1459.7 & 2.7e-04 & 4 & 5155.0 & 2.7e-04 & 2.6e-04\\
24 & 30 & 4405.0 & 1.3e-04 & 24 & 32343.0 & 1.3e-04 & 1.3e-04\\
\hline
\end{tabular}\caption{Comparison of RCG and RTR results for the nonlinear problem: number of iterations, computational time (in seconds), and the error in the preconditioned Frobenius norm between the resulting low-rank matrices and the full snapshot matrix $X$, for increasing ranks. Parameters: $m=16129$, $n=5000$, and $p=3$.}\label{tab:RCG_RTR_nonlinear}
\end{table}

We conclude this section with Table \ref{tab:RCG_nonlinear_Comp}, which compares the computational efficiency of the truncation strategy discussed in Sec.~\ref{sec:nonlinearcomputations} (marked by a `C') to the exact factorization of $X^{\odot 4}$. The truncation parameter $\rt$ is set equal to half of the manifold rank $r$.
Table \ref{tab:RCG_nonlinear_Comp} shows that the compression strategy is quite promising as, e.g., an accuracy of $8.2\cdot 10^{-4}$ is achieved with half the computational time of the approach employing the exact factorization. Nevertheless, the scaling $\rt=r/2$ can be too aggressive as $r$ increases, leading to less accurate approximations.

\begin{table}[ht]
\centering
\begin{tabular}{r r r r r r r r }
\hline
Rank & RCG-It & RCG-s & $e^{\text{RCG}}$ & RCG-C-It & RCG-C-s & $e^{\text{RCG-C}}$  & $e^{\star}$ \\
\hline
6 & 21 & 45.4 & 9.4e-03 & 19  & 22.2 &  9.4e-03 & 9.2e-03\\
10 & 23 & 200.4 & 2.5e-03 & 25  & 79.8 & 2.9e-04 & 2.4e-03\\ 
14 & 27 & 617.1 & 9.9e-04 & 100 & 804.2 & 1.3e-03 & 9.8e-04\\
18 & 41 & 1976.9 & 4.2e-04 & 23  & 395.1 & 8.2e-04 & 4.1e-04\\
22 & 20 & 1864.9 & 1.9e-04 &  26 & 844.5.2 & 7.3e-04 & 1.8e-04\\
\hline
\end{tabular}\caption{Number of iterations and computational times (in seconds) of RCG for the nonlinear functional with exact factorization of the nonlinearity and with the compressed approximation, for increasing ranks. Parameters: $m=16129$, $n=5000$, and $p=3$.}\label{tab:RCG_nonlinear_Comp}
\end{table}

\section{Conclusions} \label{sec:conclusions}

We have presented a computational framework based on Riemannian optimization for computing a low-rank approximation to the ensemble of solutions of parametrized, possibly nonlinear, systems. The approach is complemented by a rigorous analysis of the decay of the singular values of the solution matrix, under the common assumption of holomorphic regularity of the system's coefficients on the parameters.
Numerical experiments demonstrate that the proposed framework is competitive against assembling the full snapshot matrix, even at relatively stringent accuracy tolerances.

Several directions remain open for future investigation. First, the development of effective Hessian approximations could significantly improve the efficiency of trust-region methods, which in our experiments were not competitive against Riemannian conjugate gradient. A natural starting point would be a mean-based approximation, which is a popular strategy for stochastic Galerkin systems; see, e.g., \cite{powell2009block}. 

Second, as this is among the first works to address low-rank representations of nonlinearities, several computational challenges remain open. While a compression strategy has been proposed to mitigate the $r^4$ scaling arising from the quartic nonlinearity, more tailored approaches could further improve efficiency without sacrificing accuracy. One promising idea is to delay rank compression by exploiting the compositional structure of the nonlinearity, for instance via $X\mapsto (\cT_{\tilde{r}}(X^{\odot 2}))^{\odot 2}$. Our experiments suggest this preserves accuracy more reliably, though significant computational gains remain elusive since the cost of the rank-$\tilde{r}$ truncation of $X^{\odot 2}$ still scales as $r^4$.
Randomized truncation techniques may alleviate this bottleneck, provided they remain simple enough to admit closed-form derivatives.

Finally, the treatment of general nonlinearities (i.e., beyond the polynomial case) poses the broader challenge of evaluating nonlinear functions in low-rank format within an optimization algorithm. Extending techniques from reduced order modeling, such as CUR factorizations and DEIM, to settings compatible with Riemannian optimization algorithms is a compelling direction that deserves further exploration.

\section*{Acknowledgments}
MS and TV are members of the INdAM-GNCS group. MS acknowledges a visit to the Politecnico di Torino funded by the National Center for Theoretical Sciences (NCTS) of Taiwan. TV acknowledges a funded visit at the NCTS and has been partially funded by Progetto di Ricerca GNCS-INdAM, CUP\_E53C25002010001.

\appendix

\section{Riemannian Hessian with a preconditioned inner product} \label{SM:RiemannianHessian}
In this appendix, we report the calculations to find the parameterization $ ( \Mh, \Uhp, \Vhp ) $ of the Riemannian Hessian $ \Hessian f(X)[H] $.

Let $ X = \Ut \Sigmat \Vt\tr $.
Following \cite[Ch.~7.5]{boumal_2023}, we define the smooth extension of $\grad f(X)$ as 
\[
	\bar{G}(X) = \Proj_{X}(Z) = P_{\Ut} ZP_{\Vt}+P_{\Ut}^\perp Z P_{\Vt}+P_{\Ut} Z P_{\Vt}^\perp=ZP_{\Vt}+P_{\Ut} Z-P_{\Ut} ZP_{\Vt},\]
where $ Z = \grad \bar{f}(X) $, with $ \bar{f}(X) $ the smooth extension of $ f(X) $. The directional derivative of $ \bar{G}(X) $ along a tangent vector direction $H$ is
\begin{equation}\label{eq:derivativesmoothextensionGradient}
   \D\!\bar{G}(X)[H] = P_{\Ut}\dt{Z}P_{\Vt} + P_{\Ut}^{\perp} (\dt{Z} P_{\Vt} + Z \dt{P}_{\Vt}) + (P_{\Ut} \dt{Z} + \dt{P}_{\Ut} Z) P_{\Vt}^{\perp},
\end{equation}
where $\dt{Z}=\Hessian \bar{f}(X)[H|$.
Here, the projectors are modified according to the (preconditioned) inner product, i.e.,
\[
   P_{\Ut} = \Ut\Ut\tr\! K, \quad \P_{\Ut}^{\perp} = I - \Ut\Ut\tr\! K, \quad P_{\Vt} = \Xi_0\Vt\Vt\tr, \quad P_{\Vt}^{\perp} = I - \Xi_0\Vt\Vt\tr.
\]
Using the same curve as in~\protect{\cite[Ch.~7.5]{boumal_2023}}, one obtains the closed expressions for $\dt{P}_{\Ut}$ and $\dt{P}_{\Vt}$,
\begin{equation}\label{eq:modified_projectors}
   \dt{P}_{\Ut} = \Ut\Sigmat^{-1}\Utp\tr\! K + \Utp \Sigmat^{-1}\Ut\tr\! K, \qquad \dt{P}_{\Vt} = \Xi_0\Vt\Sigmat^{-1}\Vtp\tr + \Xi_0\Vtp \Sigmat^{-1}\Vt\tr,
\end{equation}
where $ ( M, \Utp, \Vtp ) $ is the triple representing $H$. 

Starting from~\eqref{eq:derivativesmoothextensionGradient}, we distribute the products
\begin{align*}
   \D\!\bar{G}(X)[H] &= P_{\Ut}\dt{Z}P_{\Vt} + P_{\Ut}^{\perp} \dt{Z} P_{\Vt} + P_{\Ut}^{\perp} Z \dt{P}_{\Vt} + P_{\Ut} \dt{Z} P_{\Vt}^{\perp} + \dt{P}_{\Ut} Z P_{\Vt}^{\perp} \\
   &= \underbrace{P_{\Ut}\dt{Z}P_{\Vt} + P_{\Ut}^{\perp} \dt{Z} P_{\Vt} + P_{\Ut} \dt{Z} P_{\Vt}^{\perp}}_{\PTXM(\dt{Z})} + P_{\Ut}^{\perp} Z \dt{P}_{\Vt} + \dt{P}_{\Ut} Z P_{\Vt}^{\perp}.
\end{align*}
We recognize, in the first three terms on the right-hand side, the projection of $ \dt{Z} $ onto the tangent space $ \mathrm{T}_{X}\cMr $, defined as
\begin{equation}\label{eq:PTXMdtZ}
   \PTXM(\dt{Z}) = P_{\Ut}\dt{Z}P_{\Vt} + P_{\Ut}^{\perp}\dt{Z}P_{\Vt} + P_{\Ut}\dt{Z}P_{\Vt}^{\perp}.
\end{equation}

Substituting~\eqref{eq:modified_projectors}, we get
\begin{equation}\label{eq:B.4}
   \begin{split}
      \D\!\bar{G}(X)[H] &= \PTXM(\dt{Z}) + P_{\Ut}^{\perp} Z (\Xi_{0} \Vt \Sigmat^{-1}\Vtp\tr + \Xi_0\Vtp \Sigmat^{-1}\Vt\tr) \\
      &+ (\Ut\Sigmat^{-1}\Utp\tr\! K + \Utp \Sigmat^{-1}\Ut\tr\! K) Z P_{\Vt}^{\perp}.
   \end{split}
\end{equation}

We now consider the Riemannian Hessian parameterization (i.e., the Hessian represented by the triplet $ (\Mh, \Uhp, \Vhp) $)
\begin{equation}\label{eq:B.5}
	\Hessian f(X)[H]=\Ut\Mh\Vt\tr + \Uhp\Vt\tr + \Ut\Vhp\tr,
\end{equation}
and exploit the usual technique of multiplying by the left and right by appropriate matrices, together with the modified orthogonality conditions and gauge conditions, in order to explicitly compute the parameters $ \Mh $, $ \Uhp $ and $ \Vhp $.

To find $ \Mh $, we start by left-multiplying~\eqref{eq:B.5} by $ \Ut\tr \! K\tr $ and right-multiplying by $ \Xi_0\Vt $, obtaining
\begin{equation}\label{eq:B.6}
   \Mh = \Ut\tr \! K\tr (\Hessian f(X)[H]) \, \Xi_0\Vt.
\end{equation}
Using the fact that the Riemannian Hessian is the orthogonal projection of~\eqref{eq:derivativesmoothextensionGradient} onto the tangent space at $X$, and using the definition of $ \PTXM(\cdot) $, we have
\begin{equation}\label{eq:B.7}
\begin{split}
   \Hessian f(X)[H] &= \PTXM(\D\!\bar{G}(X)[H]) \\ 
                    &= P_{\Ut} (\D\!\bar{G}(X)[H]) \, P_{\Vt} + P_{\Ut}^{\perp} (\D\!\bar{G}(X)[H]) \, P_{\Vt} + P_{\Ut} (\D\!\bar{G}(X)[H]) \, P_{\Vt}^{\perp}.
\end{split}
\end{equation}
Inserting~\eqref{eq:B.7} into~\eqref{eq:B.6} we get
\begin{align*}
   \Mh &= \Ut\tr \! K\tr (\Hessian f(X)[H]) \, \Xi_0\Vt \\
             &\xeq[]{\eqref{eq:B.6}} \Ut\tr \! K\tr (\D\!\bar{G}(X)[H]) \, \Xi_0\Vt \\
             &\xeq[]{\eqref{eq:B.4}} \Ut\tr \! K\tr \PTXM(\dt{Z}) \, \Xi_0\Vt.
\end{align*}
Finally, using~\eqref{eq:PTXMdtZ}, we obtain
\begin{equation}\label{eq:Mhat_appendix}
   \Mh = \Ut\tr \! K \dt{Z} \Xi_0\Vt.
\end{equation}

For the $ \Uhp $ factor, right-multiplying~\eqref{eq:B.5} by $ \Xi_0\Vt $, we obtain
\begin{align*}
   (\Hessian f(X)[H]) \, \Xi_0\Vt &= \Ut\Mh\Vt\tr \! \Xi_0\Vt + \Uhp\Vt\tr \! \Xi_0\Vt + U\Vhp\tr \! \Xi_0\Vt \\
                            &= \Ut\Mh + \Uhp,
\end{align*}
from which
\begin{equation}\label{eq:Uhp_intermediate}
   \Uhp = (\Hessian f(X)[H]) \, \Xi_0\Vt - \Ut\Mh.
\end{equation}
Developing the first term $ (\Hessian f(X)[H]) \, \Xi_0\Vt $, we obtain
\begin{align*}
   (\Hessian f(X)[H]) \, \Xi_0\Vt &= \left( \PTXM(\D\!\bar{G}(X)[H]) \right) \Xi_0\Vt \\
                       &\xeq[]{\eqref{eq:B.7}} P_{\Ut} (\D\!\bar{G}(X)[H]) \, P_{\Vt}\Xi_0\Vt + P_{\Ut}^{\perp} (\D\!\bar{G}(X)[H]) \, P_{\Vt}\Xi_0\Vt \\
                       & \ \qquad + P_{\Ut} (\D\!\bar{G}(X)[H]) \,\cancel{P_{\Vt}^{\perp} \Xi_0\Vt} \\
                       &= P_{\Ut} (\D\!\bar{G}(X)[H]) \, \Xi_0\Vt + P_{\Ut}^{\perp} (\D\!\bar{G}(X)[H]) \, \Xi_0\Vt + 0 \\
                       &= (\D\!\bar{G}(X)[H]) \, \Xi_0\Vt.
\end{align*}
Hence, substituting the last expression into~\eqref{eq:Uhp_intermediate}, we obtain
\begin{equation}\label{eq:Uhp_intermediate_n2}
   \Uhp = (\D\!\bar{G}(X)[H]) \, \Xi_0\Vt - \Ut\Mh.
\end{equation}
Now, we focus on the first term in~\eqref{eq:Uhp_intermediate_n2}, i.e.,
\begin{align}
   (\D\!\bar{G}(X)[H]) \, \Xi_0V & \xeq[]{\eqref{eq:B.4}} \left( \PTXM(\dt{Z}) + P_{\Ut}^{\perp} Z (\ldots) + (\ldots) Z P_{\Vt}^{\perp} \right) \Xi_0\Vt \nonumber \\
                             & = \PTXM(\dt{Z}) \, \Xi_0\Vt + P_{\Ut}^{\perp} Z (\Xi_0\Vt\Sigmat^{-1}\Vtp\tr + \Xi_0\Vtp \Sigmat^{-1}\Vt\tr) \, \Xi_0\Vt + 0 \nonumber \\
                             & = \PTXM(\dt{Z}) \, \Xi_0\Vt + P_{\Ut}^{\perp} Z \Xi_0\Vtp \Sigmat^{-1}. \label{eq:B.11}
\end{align}
Developing the first term in the last expression:
\begin{equation}\label{eq:B.12}
   \PTXM(\dt{Z}) \, \Xi_0\Vt \xeq[]{\eqref{eq:PTXMdtZ}} P_{\Ut}\dt{Z}P_{\Vt} \Xi_0\Vt + P_{\Ut}^{\perp}\dt{Z}P_{\Vt} \Xi_0\Vt + \cancel{P_{\Ut}\dt{Z}P_{\Vt}^{\perp} \Xi_0\Vt} = P_{\Ut}\dt{Z}\Xi_0\Vt + P_{\Ut}^{\perp}\dt{Z} \Xi_0\Vt.
\end{equation}
Then, continuing from~\eqref{eq:B.11},
\begin{align*}
   (\D\!\bar{G}(X)[H]) \, \Xi_0\Vt &= \PTXM(\dt{Z}) \, \Xi_0\Vt + P_{\Ut}^{\perp} Z \Xi_0\Vtp \Sigmat^{-1} \\
                             & \xeq[]{\eqref{eq:B.12}} P_{\Ut}\dt{Z}\Xi_0\Vt + P_{\Ut}^{\perp}\dt{Z}\Xi_0\Vt + P_{\Ut}^{\perp} Z \Xi_0 \Vtp \Sigmat^{-1} \\
                             & = P_{\Ut}\dt{Z}\Xi_0\Vt + P_{\Ut}^{\perp} \! \left(\dt{Z}\Xi_0\Vt + Z \Xi_0 \Vtp \Sigmat^{-1}\right).
\end{align*}
Inserting the last expression into~\eqref{eq:Uhp_intermediate_n2}, and using \eqref{eq:Mhat_appendix}, we get
\begin{align*}
   \Uhp &= P_{\Ut}\dt{Z}\Xi_0\Vt + P_{\Ut}^{\perp} \! \left(\dt{Z}\Xi_0\Vt + Z \Xi_0 \Vtp \Sigmat^{-1}\right) - \Ut\Mh \\
                          & \xeq[]{\eqref{eq:Mhat_appendix}} P_{\Ut}\dt{Z}\Xi_0\Vt + P_{\Ut}^{\perp} \! \left(\dt{Z}\Xi_0\Vt + Z \Xi_0 \Vtp \Sigmat^{-1}\right) - \Ut \Ut\tr \! K \dt{Z} \Xi_0\Vt \\
                          &= \cancel{P_{\Ut}\dt{Z}\Xi_0\Vt} + P_{\Ut}^{\perp} \! \left(\dt{Z}\Xi_0\Vt + Z \Xi_0 \Vtp \Sigmat^{-1}\right) \cancel{- \Ut \Ut\tr \! K \dt{Z} \Xi_0\Vt}. \\
                          & = P_{\Ut}^{\perp} \! \left(\dt{Z}\Xi_0\Vt + Z \Xi_0 \Vtp \Sigmat^{-1}\right).
\end{align*}
So finally
\[
   \Uhp = P_{\Ut}^{\perp} \! \left(\dt{Z}\Xi_0\Vt + Z \Xi_0 \Vtp \Sigmat^{-1}\right).
\]

To compute the factor $ \Vhp $, we first transpose~\eqref{eq:B.5}, obtaining
\[
   (\Hessian f(X)[H])\tr = \Vt \Mh\tr\! \Ut\tr + 
   \Vt \Uhp + \Vhp \Ut\tr.
\]
Right-multiplying by $ K\Ut $:
\begin{align*}
   (\Hessian f(X)[H])\tr \! K\Ut & = \Vt \Mh\tr\! \Ut\tr \! K\Ut + \Vt \cancel{\Uhp K\Ut} + \Vhp \Ut\tr \! K\Ut \\
                            & = \Vt \Mh\tr + \Vhp
\end{align*}
Therefore, $\Vhp$ is given by
\begin{equation}\label{eq:Vhp_intermediate}
   \Vhp = (\Hessian f(X)[H])\tr K\Ut - \Vt \Mh\tr.
\end{equation}
Now, we focus on the first term in~\eqref{eq:Vhp_intermediate}, namely, $ (\Hessian f(X)[H])\tr K\Ut $. We transpose~\eqref{eq:B.7}, obtaining
\[
   (\Hessian f(X)[H])\tr = P_{\Vt}\tr (\D\!\bar{G}(X)[H])\tr P_{\Ut}\tr + P_{\Vt}\tr (\D\!\bar{G}(X)[H])\tr (P_{\Ut}^{\perp})\tr + (P_{\Vt}^{\perp})\tr (\D\!\bar{G}(X)[H])\tr P_{\Ut}\tr.
\]
Substituting this expression and the transpose of~\eqref{eq:Mhat_appendix} into~\eqref{eq:Vhp_intermediate}, we get
\begin{align}
   \Vhp & = P_{\Vt}\tr (\D\!\bar{G}(X)[H])\tr P_{\Ut}\tr K\Ut + \cancel{P_{\Vt}\tr (\D\!\bar{G}(X)[H])\tr (P_{\Ut}^{\perp})\tr \! K\Ut} \nonumber \\
                          & \quad + (P_{\Vt}^{\perp})\tr (\D\!\bar{G}(X)[H])\tr P_{\Ut}\tr K\Ut - \Vt \Mh\tr \nonumber \\
                          & = P_{\Vt}\tr (\D\!\bar{G}(X)[H])\tr \! K\Ut + (P_{\Vt}^{\perp})\tr (\D\!\bar{G}(X)[H])\tr \! K\Ut - \Vt \Mh\tr \nonumber \\
                          & \xeq[]{\eqref{eq:Mhat_appendix}} (\D\!\bar{G}(X)[H])\tr \! K\Ut - \Vt \Vt\tr\! \Xi_0 \dt{Z}\tr \! K\Ut  \nonumber \\
                          & = (\D\!\bar{G}(X)[H])\tr \! K\Ut - P_{\Vt}\tr \dt{Z}\tr \! K\Ut.  \label{eq:Vhp_intermediate_n2}
\end{align}
Focus on the first term $ (\D\!\bar{G}(X)[H])\tr \! K\Ut $:
\begin{align*}
   (\D\!\bar{G}(X)[H])\tr & \xeq[]{\eqref{eq:B.4}} \left( \PTXM(\dt{Z}) + P_{\Ut}^{\perp} Z (\ldots) + (\ldots) Z P_{\Vt}^{\perp} \right)\tr \\
                          & = (\PTXM(\dt{Z}))\tr + (\ldots)\tr Z\tr (P_{\Ut}^{\perp})\tr + (P_{\Vt}^{\perp})\tr Z\tr (\ldots)\tr \\
                          & \xeq[]{\eqref{eq:PTXMdtZ}} P_{\Vt}\tr \dt{Z}\tr P_{\Ut}\tr + P_{\Vt}\tr \dt{Z}\tr (P_{\Ut}^{\perp})\tr + (P_{\Vt}^{\perp})\tr \dt{Z}\tr P_{\Ut}\tr \\
                          & \ \qquad + (\ldots)\tr Z\tr (P_{\Ut}^{\perp})\tr + (P_{\Vt}^{\perp})\tr Z\tr (\ldots)\tr .
\end{align*}
Right-multiply by $ K\Ut $:
\begin{align*}
   (\D\!\bar{G}(X)[H])\tr \! K\Ut & = P_{\Vt}\tr \dt{Z}\tr P_{\Ut}\tr K\Ut + P_{\Vt}\tr \dt{Z}\tr (P_{\Ut}^{\perp})\tr K\Ut + (P_{\Vt}^{\perp})\tr \dt{Z}\tr P_{\Ut}\tr K\Ut \\
                                & \quad + (\ldots)\tr Z\tr (P_{\Ut}^{\perp})\tr K\Ut + (P_{\Vt}^{\perp})\tr Z\tr (\ldots)\tr K\Ut  \\
                                & = P_{\Vt}\tr \dt{Z}\tr P_{\Ut}\tr K\Ut + (P_{\Vt}^{\perp})\tr \dt{Z}\tr K\Ut \\
                                & \quad + (P_{\Vt}^{\perp})\tr Z\tr (K U_{\mathrm{p}} \Sigmat^{-1} \Ut\tr + K\Ut\Sigmat^{-1} \Utp\tr) K\Ut  \\
                                & = P_{\Vt}\tr \dt{Z}\tr \! K\Ut + (P_{\Vt}^{\perp})\tr \dt{Z}\tr \! K\Ut + (P_{\Vt}^{\perp})\tr Z\tr K \Utp \Sigmat^{-1} \\
                                & = P_{\Vt}\tr \dt{Z}\tr \! K\Ut + (P_{\Vt}^{\perp})\tr ( \dt{Z}\tr \! K\Ut + Z\tr \! K \Utp \Sigmat^{-1}).
\end{align*}
Finally, inserting the last expression into~\eqref{eq:Vhp_intermediate_n2}, we obtain
\begin{align*}
   \Vhp & = (\D\!\bar{G}(X)[H])\tr \! K\Ut - P_{\Vt}\tr \dt{Z}\tr \! K\Ut \\
                          & = P_{\Vt}\tr \dt{Z}\tr \! K\Ut + (P_{\Vt}^{\perp})\tr ( \dt{Z}\tr \! K\Ut + Z\tr \! K \Utp \Sigmat^{-1}) - P_{\Vt}\tr \dt{Z}\tr \! K\Ut \\
                          & = (P_{\Vt}^{\perp})\tr ( \dt{Z}\tr \! K\Ut + Z\tr \! K \Utp \Sigmat^{-1}).
\end{align*}

\bigskip

Summarizing, the parameterization of the Riemannian Hessian $ \Hessian f(X)[H] $
is given by $ (\Mh, \Uhp, \Vhp) $, with
\[
	\Mh = \Ut\tr \! K \dt{Z} \Xi_0\Vt, \qquad \Uhp = P_{\Ut}^{\perp} \! \left(\dt{Z}\Xi_0\Vt + Z \Xi_0 \Vtp \Sigmat^{-1}\right),
\]
\[
	\Vhp = (P_{\Vt}^{\perp})\tr ( \dt{Z}\tr \! K\Ut + Z\tr \! K \Utp \Sigmat^{-1}),
\]
concluding the proof of \eqref{eq:hatM}--\eqref{eq:hatVp}.

\section{Calculation of the directional derivative of the truncation of the weighted SVD and of its adjoint} \label{SM:derivative_svd_truncation}
In this appendix, we detail the calculations to compute the directional derivative of the (weigthed) rank-$\rt$ trunctation, and of its adjoint with respect to the standard Frobenious inner product.
Let
\[
	Y = \sum_{i = 1}^{r} \sigma_{i} u_{i} v_{i}\tr,
\]
with the modified orthogonality conditions (according to the preconditioned inner product)
\[
	u_{\ell}\tr K u_{j} = \delta_{\ell j}, \qquad v_{\ell}\tr \Xi_{0} v_{j} = \delta_{\ell j},\quad \ell,j\in\{1,\dots,r\}.
\]
The rank-$\rt$ truncation of $Y$ is
\[
	\cT_{\rt}(Y) = \sum_{i=1}^{\rt} \sigma_{i} u_{i} v_{i}\tr,
\]
and we denote the projectors related to the first $\rt$ singular vectors by
\[
	P_{U_{Z}} = U_{Z}U_{Z}\tr\! K, \qquad P_{V_{Z}} = \Xi_{0} V_{Z}V_{Z}\tr\! ,
\]
and those associated to the first $r$ singular vectors by
\[
	P_{U_{Y}} = U_{Y}U_{Y}\tr\! K, \qquad P_{V_{Y}} = \Xi_{0} V_{Y}V_{Y}\tr\! .
\]
We further split $U_Y$ and $V_Y$ into the two terms $U_Y=\left[U_Z\; U_{Z_\perp}\right]$ and $V_Y=\left[V_Z\; V_{Z_\perp}\right]$.
Our derivation consists in three steps.

\paragraph{Step 1} We perturb $Y$ along a direction $H\in\mathbb{R}^{m\times n}$, 
\[
	Y(t) = Y + t H,
\]
we denote with $\sigma_{\ell}(t)$, $u_{\ell}(t)$ and $v_{\ell}(t)$, the singular values and singular vectors of $Y(t)$, and recall the relations
\begin{equation*}
	\begin{cases}
		Y(t) \Xi_{0} v_{\ell}(t) = \sigma_{\ell}(t) u_{\ell}(t), \\
		Y(t)\tr\! K u_{\ell}(t) = \sigma_{\ell}(t) v_{\ell}(t).
	\end{cases}
\end{equation*}

Deriving these expressions and evaluating them at $t=0$ (omitting the dependence on $t$ for brevity), we obtain
\begin{equation*}
	\begin{cases}
		\dt{Y} \Xi_{0} v_{\ell} + Y \Xi_{0} \dt{v}_{\ell} = \dt{\sigma}_{\ell} u_{\ell} + \sigma_{\ell} \dt{u}_{\ell}, \\
		\dt{Y}\tr\! K u_{\ell} + Y\tr\! K \dt{u}_{\ell} = \dt{\sigma}_{\ell} v_{\ell} + \sigma_{\ell} \dt{v}_{\ell},
	\end{cases}
\end{equation*}
which, replacing $ \dt{Y} = H $, becomes 
\begin{equation*}
	\begin{cases}
		H \Xi_{0} v_{\ell} + Y \Xi_{0} \dt{v}_{\ell} = \dt{\sigma}_{\ell} u_{\ell} + \sigma_{\ell} \dt{u}_{\ell}, \qquad \text{(A)} \\  
		H\tr\! K u_{\ell} + Y\tr\! K \dt{u}_{\ell} = \dt{\sigma}_{\ell} v_{\ell} + \sigma_{\ell} \dt{v}_{\ell} \qquad \text{(B)}.
	\end{cases}
\end{equation*}
Next, we take the inner product of (A) with $ u_{\ell}\tr K $,
\[
	u_{\ell}\tr \! K H \Xi_{0} v_{\ell} + \underbrace{u_{\ell}\tr \! K Y \Xi_{0} \dt{v}_{\ell}}_{= \sigma_{\ell}v_{\ell}\tr \! \Xi_{0}\dt{v}_{\ell} = 0} = \dt{\sigma}_{\ell} \underbrace{u_{\ell}\tr \! K u_{\ell}}_{=1} + \sigma_{\ell} \underbrace{u_{\ell}\tr \! K \dt{u}_{\ell}}_{=0},
\]
where we used the fact that the derivative of $\| u_{\ell}(t) \|^{2} = 1$ with respect to $t$ implies $u_{\ell}\tr \! K \dt{u}_{\ell} = 0$.
We obtain an expression for the derivative at $t=0$ of the singular values, namely 
\[
	 \dt{\sigma}_{\ell} = u_{\ell}\tr \! K H \Xi_{0} v_{\ell}.
\]
\paragraph{Step 2} We now consider the expansions
\begin{equation} \label{eq:expansions}
	\dt{u}_{\ell} = \sum_{j \neq \ell} a_{\ell j} u_{j}, \qquad \dt{v}_{\ell} = \sum_{j \neq \ell} b_{\ell j} v_{j},
\end{equation}
where the index $\ell$ is excluded from the summation since, evaluating at $t=0$, the derivative of the relation $u_{\ell}(t)\tr K u_{\ell}(t)=1$ leads to $u_{\ell}\tr K \dt{u}_\ell=0$.
Still using the orthogonality conditions, it follows that $ u_{j}\tr \! K \dt{u}_{\ell} = a_{\ell j} $. Now, we take the inner product of (A) with $ u_{j}\tr \! K $, obtaining
\[
	u_{j}\tr \! K H \Xi_{0} v_{\ell} + \underbrace{u_{j}\tr \! K Y \Xi_{0} \dt{v}_{\ell}}_{= \sigma_{j}v_{j}\tr \! \Xi_{0} \dt{v}_{\ell} = \sigma_{j} b_{\ell j}} = \dt{\sigma}_{\ell} \underbrace{u_{j}\tr \! K u_{\ell}}_{=0} + \underbrace{\sigma_{\ell} u_{j}\tr \! K \dt{u}_{\ell}}_{= \sigma_{\ell} a_{\ell j}},
\]
\[
	u_{j}\tr \! K H \Xi_{0} v_{\ell} + \sigma_{j} b_{\ell j} = \sigma_{\ell} a_{\ell j}.
\]
Taking the inner product of (B) with $ v_{j}\tr \! \Xi_{0} $ yields
\[
	v_{j}\tr \! \Xi_{0} H\tr\! K u_{\ell} + \underbrace{v_{j}\tr \! \Xi_{0} Y\tr\! K \dt{u}_{\ell}}_{= \sigma_{j} u_{j}\tr \! K \dt{u}_{\ell} = \sigma_{j} a_{\ell j}} = \dt{\sigma}_{\ell} \underbrace{v_{j}\tr \! \Xi_{0} v_{\ell}}_{= 0} + \underbrace{\sigma_{\ell} v_{j}\tr \! \Xi_{0} \dt{v}_{\ell}}_{= \sigma_{\ell} b_{\ell j}},
\]
\[
	v_{j}\tr \! \Xi_{0} H\tr\! K u_{\ell} + \sigma_{j} a_{\ell j} = \sigma_{\ell} b_{\ell j}.
\]
and we can deduce the linear system of two equations in the two unknowns $ a_{\ell j} $ and $ b_{\ell j} $:
\begin{equation*}
	\begin{cases}
		u_{j}\tr \! K H \Xi_{0} v_{\ell} + \sigma_{j} b_{\ell j} = \sigma_{\ell} a_{\ell j}, \\
		u_{\ell}\tr \! K H \Xi_{0} v_{j} + \sigma_{j} a_{\ell j} = \sigma_{\ell} b_{\ell j}.
	\end{cases}
\end{equation*}
Letting $ x \coloneqq u_{j}\tr \! K H \Xi_{0} v_{\ell} $, and $ y \coloneqq v_{j}\tr \! \Xi_{0} H\tr\! K u_{\ell} $, the solutions are
\begin{equation*}
	\begin{cases}
		a_{\ell j} = \displaystyle\frac{\sigma_{\ell} x + \sigma_{j} y}{\sigma_{\ell}^{2} - \sigma_{j}^{2}}, \\
		b_{\ell j} = \displaystyle\frac{\sigma_{\ell} y + \sigma_{j} x}{\sigma_{\ell}^{2} - \sigma_{j}^{2}}.
	\end{cases}
\end{equation*}
This holds as long as $ \sigma_{\ell}^{2} - \sigma_{j}^{2} \neq 0 $.

\paragraph{Step 3}
The derivative of the rank-$\rt$ truncation of $Y$ in the direction $H$ is
\[
	\D \cT_{\rt}(Y)[H] = \sum_{i=1}^{\rt} \dt{\sigma}_{i} u_{i}v_{i}\tr + \sum_{i=1}^{\rt} \sigma_{i} \dt{u}_{i} v_{i}\tr + \sum_{i=1}^{\rt} \sigma_{i} u_{i} \dt{v}_{i}\tr.
\]
Inserting the expansions~\eqref{eq:expansions}, we get
\[
	\D \cT_{\rt}(Y)[H] = \underbrace{\sum_{i=1}^{\rt} \dt{\sigma}_{i} u_{i}v_{i}\tr}_{\eqqcolon T_{1}} + \underbrace{\sum_{i=1}^{\rt} \sum_{j \neq i} \sigma_{i} a_{ij} u_{j} v_{i}\tr}_{\eqqcolon T_{2}} + \underbrace{\sum_{i=1}^{\rt} \sum_{j \neq i} \sigma_{i} b_{ij} u_{i} v_{j}\tr}_{\eqqcolon T_{3}}.
\]
Now we split the contributions of $T_{2}$ and $T_{3}$ according to $ 1 \leq j \leq \rt $, $ \rt < j \leq r $, and $ j \geq r $.

\paragraph{Contributions for $ 1 \leq j \leq \rt $} We define $ M_{ij} \coloneqq u_{i}\tr \! K H \Xi_{0} v_{j} $, so that $ x = M_{ji} $ and $ y = M_{ij} $. Then
\[
	\sigma_{i} a_{ij} = \frac{\sigma_{i}^{2} M_{ji} + \sigma_{i}\sigma_{j} M_{ij}}{\sigma_{i}^{2} - \sigma_{j}^{2}}, \qquad \sigma_{i} b_{ij} = \frac{\sigma_{i}^{2} M_{ij} + \sigma_{i}\sigma_{j} M_{ji}}{\sigma_{i}^{2} - \sigma_{j}^{2}}.
\]
The outer product $ u_{\alpha} v_{\beta}\tr $ (with $ \alpha = i $, $ \beta = j $, $ i, j \leq \rt $) gets from $T_{2}$ the contribution ($j = \alpha$, $i = \beta$, $\alpha \neq \beta$)  and from $T_{3}$ the contribution ($i = \alpha$, $j = \beta$, $\alpha \neq \beta$).

Summing the two contributions (from $T_{2}$ + from $T_{3}$)
\begin{align*}
	\text{from } T_{2} + \text{from } T_{3} &= \frac{\sigma_{\beta}^{2} M_{\alpha\beta} + \sigma_{\beta}\sigma_{\alpha} M_{\beta\alpha}}{\sigma_{\beta}^{2} - \sigma_{\alpha}^{2}} + \frac{\sigma_{\alpha}^{2} M_{\alpha\beta} + \sigma_{\alpha}\sigma_{\beta} M_{\beta\alpha}}{\sigma_{\alpha}^{2} - \sigma_{\beta}^{2}} \\
	&= \frac{-\sigma_{\beta}^{2} M_{\alpha\beta} - \sigma_{\beta}\sigma_{\alpha} M_{\beta\alpha} + \sigma_{\alpha}^{2} M_{\alpha\beta} + \sigma_{\alpha}\sigma_{\beta} M_{\beta\alpha}}{\sigma_{\alpha}^{2} - \sigma_{\beta}^{2}} \\
	&= M_{\alpha\beta}.
\end{align*}
Putting all together ($T_{1}$ with $\alpha = \beta$, $T_{2}$ with $\alpha \neq \beta$, $\alpha, \beta \in \{ 1, \rt \} $, $T_{3}$ with $\alpha \neq \beta$, $\alpha, \beta \in \{ 1, \rt \} $), we obtain
\[
	\sum_{\alpha, \beta \leq \rt} M_{\alpha\beta} u_{\alpha}v_{\beta}\tr = U_{Z}U_{Z}\tr K H \Xi_{0} V_{Z}V_{Z}\tr = P_{U_{Z}}\tr H P_{V_{Z}}.
\]

\paragraph{Contributions with $ j > r \implies \sigma_{j} = 0 $} We get
\[
	\sigma_{i} a_{ij} = \frac{\sigma_{i}^{2} M_{ji}}{\sigma_{i}^{2}} = M_{ji}, \qquad \sigma_{i} b_{ij} = \frac{\sigma_{i}^{2} M_{ij}}{\sigma_{i}^{2}} = M_{ij}.
\]
Summing the two contributions
\[
	\sum_{i=1}^{\rt} \sum_{j > r} \sigma_{i} a_{ij} u_{j} v_{i}\tr + \sum_{i=1}^{\rt} \sum_{j > r} \sigma_{i} b_{ij} u_{i} v_{j}\tr = \underbrace{\sum_{i=1}^{\rt} \sum_{j > r} M_{ji} u_{j} v_{i}\tr}_{T_{2}} + \underbrace{\sum_{i=1}^{\rt} \sum_{j > r} M_{ij} u_{i} v_{j}\tr}_{T_{3}}.
\]
Now we observe that from $T_{2}$
\[
	\sum_{i=1}^{\rt} \sum_{j > r} (u_{j}\tr \! K H \Xi_{0} v_{i}) u_{j} v_{i}\tr = \sum_{i=1}^{\rt} \sum_{j > r} (u_{j}u_{j}\tr) K H \Xi_{0} v_{i}v_{i}\tr = (I - P_{U_{Y}}) H P_{V_{Z}},
\]
while from $T_{3}$
\[
	\sum_{i=1}^{\rt} \sum_{j > r} (u_{i}u_{i}\tr) K H \Xi_{0} v_{j}v_{j}\tr = P_{U_{Z}} H (I - P_{V_{Y}}).
\]

\paragraph{Contributions $\rt < j \leq r$, $\sigma_{j} \neq 0$}
We define
\[
	\Xi_{ij} \coloneqq \frac{\sigma_{i}^{2}}{\sigma_{i}^{2} - \sigma_{j}^{2}} = 1 + \frac{\sigma_{j}^{2}}{\sigma_{i}^{2} - \sigma_{j}^{2}}, \qquad \Psi_{ij} \coloneqq \frac{\sigma_{i} \sigma_{j}}{\sigma_{i}^{2} - \sigma_{j}^{2}}.
\]
and obtain the expressions
\begin{equation} \label{eq:184}
	\sigma_{i} a_{ij} = \frac{\sigma_{i}^{2} M_{ji} + \sigma_{i}\sigma_{j} M_{ij}}{\sigma_{i}^{2} - \sigma_{j}^{2}} = \Xi_{ij} M_{ji} + \Psi_{ij} M_{ij},
\end{equation}
\[
	\sigma_{i} b_{ij} = \frac{\sigma_{i}^{2} M_{ij} + \sigma_{i}\sigma_{j} M_{ji}}{\sigma_{i}^{2} - \sigma_{j}^{2}} = \Xi_{ij} M_{ij} + \Psi_{ij} M_{ji}.
\]
We introduce the blocks
\[
	H_{21} = U_{Z_{\perp}}\tr \! K H \Xi_{0}	V_{Z},\qquad H_{12} = U_{Z}\tr K H \Xi_{0} V_{Z_{\perp}}.
\]
Note that $ (H_{21})_{ji} = M_{ji} = u_{j}\tr \! K H \Xi_{0} v_{i} $. Then, inserting \eqref{eq:184}, 
\begin{align*}
	\sum_{i=1}^{\rt}\sum_{j=\rt+1}^{r} \sigma_{i} a_{ij} u_{j} v_{i}\tr &= \sum_{i=1}^{\rt}\sum_{j=\rt+1}^{r} (\Xi_{ij} M_{ji} + \Psi_{ij} M_{ij}) u_{j} v_{i}\tr \\
	&= U_{Z_{\perp}} \left[ \Xi \odot H_{21} + \Psi \odot H_{12}\tr \right] V_{Z}\tr \\
	&= U_{Z_{\perp}} H_{21} V_{Z}\tr + U_{Z_{\perp}} \! \left[ (\Xi-I) \odot H_{21} + \Psi \odot H_{12}\tr \right] V_{Z}\tr.
\end{align*}
Similarly,
\begin{align*}
	\sum_{i=1}^{\rt}\sum_{j=\rt+1}^{r} \sigma_{i} b_{ij} u_{i} v_{j}\tr &= U_{Z} \left[ \Xi\tr\! \odot H_{12} + \Psi\tr\! \odot H_{21}\tr \right] V_{Z_{\perp}}\tr \\
	&= U_{Z} H_{12} V_{Z_{\perp}}\tr + U_{Z} \left[ (\Xi\tr\! - I) \odot H_{12} + \Psi\tr\! \odot H_{21}\tr \right] V_{Z_{\perp}}\tr.
\end{align*}
Collecting all terms, we get the final expression of the directional derivative,
\begin{equation}
	\begin{split}
		\D \cT_{r}(Y)[H] = & \ P_{U_{Z}} H P_{V_{Z}} + (I - P_{U_{Y}}) H P_{V_{Z}} + P_{U_{Z}} H (I - P_{V_{Y}}) \\
		& \ + U_{Z_{\perp}} H_{21} V_{Z}\tr + U_{Z_{\perp}} \! \left[ (\Xi-I) \odot H_{21} + \Psi \odot H_{12}\tr \right] V_{Z}\tr \\
		& \ + U_{Z} H_{12} V_{Z_{\perp}}\tr + U_{Z} \left[ (\Xi\tr\! - I) \odot H_{12} + \Psi\tr\! \odot H_{21}\tr \right] V_{Z_{\perp}}\tr.
	\end{split}
\end{equation}
An alternative expression is obtained by manipulating a few terms as
\begin{equation*}
\begin{aligned}
&P_{U_{Z}} H P_{V_{Z}} + (I - P_{U_{Y}}) H P_{V_{Z}} + P_{U_{Z}} H (I - P_{V_{Y}})+ U_{Z_\perp} H_{21}V_{Z}\tr + U_Z H_{12} V_{Z_\perp}\tr\\
\;&=P_{U_{Z}} H P_{V_{Z}} + (I - P_{U_{Y}}) H P_{V_{Z}} + P_{U_{Z}} H (I - P_{V_{Y}})\\
& + P_{U_{Z_\perp}} H P_{V_{Z}} + P_{U_Z} H P_{V_{Z_\perp}}\\
&=P_{U_Z} H + H P_{V_Z} - P_{U_Z}H P_{V_Z}.
\end{aligned}
\end{equation*}
A compact expression is then
\begin{equation}
	\begin{split}
		\D \cT_{\rt}(Y)[H] = & P_{U_Z} H + H P_{V_Z} - P_{U_Z}H P_{V_Z} \\
		& \ + U_{Z_{\perp}} \! \left[ (\Xi-I) \odot H_{21} + \Psi \odot H_{12}\tr \right] V_{Z}\tr \\
		& \ + U_{Z} \left[ (\Xi\tr\! - I) \odot H_{12} + \Psi\tr\! \odot H_{21}\tr \right] V_{Z_{\perp}}\tr,
	\end{split}
\end{equation}
with 
\[
	H_{21} = U_{Z_{\perp}}\tr \! K H \Xi_{0}	V_{Z},\qquad H_{12} = U_{Z}\tr K H \Xi_{0} V_{Z_{\perp}}.
\]

\paragraph{Calculation of the adjoint operator of $\D \cT_{r}(Y)[H]$}
\[
	\langle \D \cT_{\rt}(Y)[H], \varOmega \rangle = \langle H, (\D \cT_{\rt}(Y))^{\ast}[\varOmega] \rangle.
\]
We calculate the adjoint term by term:
\[
	\langle U_{Z}U_{Z}\tr KH, \varOmega \rangle = \langle H, K U_{Z}U_{Z}\tr \varOmega \rangle.
\]
\[
	\langle H \Xi_{0} V_{Z}V_{Z}\tr, \varOmega \rangle = \langle H, \varOmega V_{Z}V_{Z}\tr \Xi_{0} \rangle.
\]
\[
	\langle - U_{Z}U_{Z}\tr KH \Xi_{0} V_{Z}V_{Z}\tr, \varOmega \rangle = -\langle H, K U_{Z}U_{Z}\tr \varOmega V_{Z}V_{Z}\tr \Xi_{0} \rangle.
\]
The part of $ U_{Z_{\perp}} \! \left[ (\Xi-I) \odot H_{21} + \Psi \odot H_{12}\tr \right] V_{Z}\tr $ relative to $(\Xi-I)$:
\begin{align*}
	\left\langle U_{Z_{\perp}} \! \left[ (\Xi-I) \odot H_{21} \right] V_{Z}\tr, \varOmega \right\rangle &= \langle U_{Z_{\perp}} \! \left[ (\Xi-I) \odot U_{Z_{\perp}}\tr \! KH \Xi_{0} V_{Z} \right] V_{Z}\tr, \varOmega \rangle \\
	&= \langle \left[ (\Xi-I) \odot U_{Z_{\perp}}\tr \! KH \Xi_{0} V_{Z} \right], U_{Z_{\perp}}\tr \! \varOmega V_{Z}\rangle \\
	&= \langle U_{Z_{\perp}}\tr \! KH \Xi_{0} V_{Z}, (\Xi-I) \odot (U_{Z_{\perp}}\tr \! \varOmega V_{Z})\rangle \\
	&= \langle H, K U_{Z_{\perp}} \! \big[(\Xi-I) \odot \underbrace{(U_{Z_{\perp}}\tr \! \varOmega V_{Z})}_{\eqqcolon \varOmega_{21}}\big] V_{Z}\tr \! \Xi_{0} \rangle \\
	&= \langle H, K U_{Z_{\perp}} \! \left[(\Xi-I) \odot \varOmega_{21} \right] V_{Z}\tr \! \Xi_{0} \rangle.   \tag{I.a}
\end{align*}
The part of $ U_{Z_{\perp}} \! \left[ (\Xi-I) \odot H_{21} + \Psi \odot H_{12}\tr \right] V_{Z}\tr $ relative to $\Psi$:
\begin{align*}
	\left\langle U_{Z_{\perp}} \! \left[ \Psi \odot H_{12}\tr \right] V_{Z}\tr, \varOmega \right\rangle &= \left\langle U_{Z_{\perp}} \! \left[ \Psi \odot (U_{Z}\tr K H \Xi_{0} V_{Z_{\perp}})\tr \right] V_{Z}\tr, \varOmega \right\rangle \\
	&= \left\langle U_{Z_{\perp}} \! \left[ \Psi \odot (V_{Z_{\perp}}\tr \! \Xi_{0} H\tr \! K U_{Z}) \right] V_{Z}\tr, \varOmega \right\rangle \\
	&= \left\langle  \Psi \odot (V_{Z_{\perp}}\tr \! \Xi_{0} H\tr \! K U_{Z}), U_{Z_{\perp}}\tr \! \varOmega V_{Z} \right\rangle \\
	&= \left\langle V_{Z_{\perp}}\tr \! \Xi_{0} H\tr \! K U_{Z}, \Psi \odot (U_{Z_{\perp}}\tr \! \varOmega V_{Z}) \right\rangle \\
	&= \langle H\tr, \Xi_{0} V_{Z_{\perp}} [ \Psi \odot \underbrace{(U_{Z_{\perp}}\tr \! \varOmega V_{Z})}_{\eqqcolon \varOmega_{21}} ] U_{Z}\tr \! K \rangle \\
	&= \left\langle H, KU_{Z} \left[ \Psi\tr\! \odot \varOmega_{21}\tr \right] V_{Z_{\perp}}\tr \Xi_{0} \right\rangle.   \tag{I.b}
\end{align*}
The last term $U_{Z} \left[ (\Xi\tr\! - I) \odot H_{12} + \Psi\tr\! \odot H_{21}\tr \right] V_{Z_{\perp}}\tr$ yields, for the part relative to $(\Xi\tr\! - I)$,
\begin{align*}
	\left\langle U_{Z} \left[ (\Xi\tr\! - I) \odot H_{12} \right] V_{Z_{\perp}}\tr, \varOmega \right\rangle &= \langle (\Xi\tr\! - I) \odot H_{12}, \underbrace{U_{Z}\tr \varOmega  V_{Z_{\perp}}}_{\eqqcolon \varOmega_{12}} \rangle \\
	&= \left\langle (\Xi\tr\! - I) \odot (U_{Z}\tr K H \Xi_{0} V_{Z_{\perp}}), \varOmega_{12} \right\rangle \\
	&= \left\langle U_{Z}\tr K H \Xi_{0} V_{Z_{\perp}}, (\Xi\tr\! - I) \odot \varOmega_{12} \right\rangle \\
	&= \left\langle H, K U_{Z} \left[ (\Xi\tr\! - I) \odot \varOmega_{12} \right] V_{Z_{\perp}}\tr \Xi_{0} \right\rangle.   \tag{II.a}
\end{align*}
For the part of $U_{Z} \left[ (\Xi\tr\! - I) \odot H_{12} + \Psi\tr\! \odot H_{21}\tr \right] V_{Z_{\perp}}\tr$ relative to $\Psi$, we obtain
\begin{align*}
	\left\langle U_{Z} \left[ \Psi\tr\! \odot H_{21}\tr \right] V_{Z_{\perp}}\tr, \varOmega \right\rangle &= \langle \Psi\tr\! \odot H_{21}\tr, \underbrace{U_{Z}\tr \varOmega  V_{Z_{\perp}}}_{\eqqcolon \varOmega_{12}} \rangle \\
	&= \langle \Psi\tr\! \odot (U_{Z_{\perp}}\tr \! K H \Xi_{0} V_{Z})\tr, \varOmega_{12} \rangle \\
	&= \langle V_{Z}\tr \Xi_{0} H\tr \! K U_{Z_{\perp}}, \Psi\tr\! \odot \varOmega_{12} \rangle \\
	&= \langle H\tr, \Xi_{0} V_{Z} (\Psi\tr\! \odot \varOmega_{12}) U_{Z_{\perp}}\tr\! K  \rangle \\
	&= \langle H, K U_{Z_{\perp}} (\Psi \odot \varOmega_{12}\tr) V_{Z}\tr\! \Xi_{0} \rangle.   \tag{II.b}
\end{align*}
Collecting all terms,
\[
	\langle \D \cT_{\rt}(Y)[H], \varOmega \rangle = \langle H, (\D \cT_{\rt}(Y))^{\ast}[\varOmega] \rangle,
\]
\begin{equation*}
	\begin{split}
		(\D \cT_{\rt}(Y))^{\ast}[\varOmega] =& \ K U_{Z}U_{Z}\tr \varOmega + \varOmega V_{Z}V_{Z}\tr \Xi_{0} - K U_{Z}U_{Z}\tr \varOmega V_{Z}V_{Z}\tr \Xi_{0} \\
		& \ + K U_{Z_{\perp}} \! \left[(\Xi-I) \odot \varOmega_{21} \right] V_{Z}\tr \! \Xi_{0} + KU_{Z} \left[ \Psi\tr\! \odot \varOmega_{21}\tr \right] V_{Z_{\perp}}\tr \Xi_{0} \\
		& \ + K U_{Z} \left[ (\Xi\tr\! - I) \odot \varOmega_{12} \right] V_{Z_{\perp}}\tr \Xi_{0} + K U_{Z_{\perp}} (\Psi \odot \varOmega_{12}\tr) V_{Z}\tr\! \Xi_{0},
	\end{split}
\end{equation*}
\begin{equation*}
	\begin{split}
		(\D \cT_{\rt}(Y))^{\ast}[\varOmega] =& \ K U_{Z}U_{Z}\tr \varOmega + \varOmega V_{Z}V_{Z}\tr \Xi_{0} - K U_{Z}U_{Z}\tr \varOmega V_{Z}V_{Z}\tr \Xi_{0} \\
		& \ + K U_{Z} \left[ (\Xi\tr\! - I) \odot \varOmega_{12} \right] V_{Z_{\perp}}\tr \Xi_{0} + KU_{Z} \left[ \Psi\tr\! \odot \varOmega_{21}\tr \right] V_{Z_{\perp}}\tr \Xi_{0} \\
		& \ + K U_{Z_{\perp}} \! \left[(\Xi-I) \odot \varOmega_{21} \right] V_{Z}\tr \! \Xi_{0} + K U_{Z_{\perp}} (\Psi \odot \varOmega_{12}\tr) V_{Z}\tr\! \Xi_{0},
	\end{split}
\end{equation*}
\begin{equation} \label{eq:adjoint_deriv}
	\begin{split}
		(\D \cT_{\rt}(Y))^{\ast}[\varOmega] =& \ K U_{Z}U_{Z}\tr \varOmega + \varOmega V_{Z}V_{Z}\tr \Xi_{0} - K U_{Z}U_{Z}\tr \varOmega V_{Z}V_{Z}\tr \Xi_{0} \\
		& \ + K U_{Z} \left[ \Psi\tr\! \odot \varOmega_{21}\tr + (\Xi\tr\! - I) \odot \varOmega_{12} \right] V_{Z_{\perp}}\tr \Xi_{0} \\
		& \ + K U_{Z_{\perp}} \! \left[ \Psi \odot \varOmega_{12}\tr + (\Xi-I) \odot \varOmega_{21} \right] V_{Z}\tr \! \Xi_{0},
	\end{split}
\end{equation}
with $ \varOmega_{12} = U_{Z}\tr \varOmega  V_{Z_{\perp}} $ and $ \varOmega_{21} =  U_{Z_{\perp}}\tr\! \varOmega V_{Z} $.

\bibliographystyle{aomalpha}

\begin{footnotesize}
   \bibliography{Lowrank_parametrized_systems.bib}
\end{footnotesize}

\end{document}